\newtheorem{theorem}{Theorem}
\newtheorem{lemma}{Lemma}
\newtheorem{remark}{Remark}
\newcommand{\R}{\mathbb{R}}
\renewcommand{\max}{{\mathrm{max}}}
\newcommand{\norm}[1]{\left\| #1 \right\|}
\newcolumntype{C}{>{$}c<{$}}
\newcolumntype{L}{>{$}l<{$}}
\newcolumntype{R}{>{$}r<{$}}
\lbrace\begin{array}{@{}l@{}}}%
\lbrace\begin{eqnarray}{@{}l@{}}}%
\journal{Applied Numerical Mathematics}
\begin{document}

\begin{frontmatter}



\title{Numerical Methods for the Nonlocal Wave Equation of the Peridynamics}

\author[poliba]{G. M. Coclite}
\ead{giuseppemaria.coclite@poliba.it}

\author[dim]{A. Fanizzi}
\ead{alessandro.fanizzi@hotmail.it}

\author[dim]{L. Lopez} 
\ead{luciano.lopez@uniba.it}

\author[poliba]{F. Maddalena}
\ead{francesco.maddalena@poliba.it}

\author[dim]{S. F. Pellegrino}
\ead{sabrina.pellegrino@uniba.it}

\address[dim]{Dipartimento di Matematica, Universit\`a degli Studi di Bari Aldo Moro, via E. Orabona 4, 70125 Bari, Italy}
\address[poliba]{Dipartimento di Matematica, Politecnico di Bari, Via Re David, 70125, Bari, Italy.}


\begin{abstract} In this paper we will consider the peridynamic equation of motion which is described by a second order in time partial integro-differential equation. This equation has recently received great attention in several fields of Engineering because seems to provide an effective approach to modeling mechanical systems avoiding spatial discontinuous derivatives and body singularities.  
In particular, we will consider the linear model of peridynamics in a one-dimensional spatial domain. 
Here we will review some numerical  techniques to solve this equation and propose some new computational methods of higher order  in space;  moreover we will see  how to apply the methods studied for the linear model  to the nonlinear one. 
Also a spectral method for the spatial discretization of  the linear problem will be discussed.  Several numerical tests will be given in order to validate our results.

\end{abstract}

\begin{keyword}
peridynamic equation \sep quadrature formula \sep spectral methods \sep trigonometric time discretization.


\MSC[2010] 35L05 \sep 35Q74 \sep 65D32 \sep 65M12 \sep 65M70

\end{keyword}

\end{frontmatter}

\section{Introduction}

Nonlocal continuum mechanics aims at modeling long-range interactions occurring in real materials, ruling several  phenomena like fracture instabilities, damage, defects, phase boundaries, etc. Capturing these effects is a long standing problem in continuum physics and different models have been proposed in literature (see \cite{Er,EE,K,Ku}). More recent studies show that nonlocal models based only on derivatives of integer order are not completely satisfactory to depict the nature of several phenomena and therefore, on the basis of physical and mathematical considerations, in order to model such situations, differential operators of fractional orders may be introduced~\cite{Garrappa2013,Garrappa2014,Garrappa2015,CDMV}. In \cite{Silling_2000} Silling introduced {\bf  peridynamics} as a nonlocal elasticity theory: he proposed a model describing the motion of a material body based on integro-differential partial equations, not involving spatial derivatives. 
The main idea underlying  peridynamic theory relies in assuming a force $f$, acting on a spatial region $V_x$, occupied by a material body, as the fundamental interaction between the particle  $x$ and the particle $\hat{x}$ belonging to $V_x$, which represents the peridynamic neighborhood of $x$.  This basic assumption also suggests that peridynamics could be suitable for multiscale material modeling (\cite{L,QYX,LSJ}).

We fix $[0,T]$ as the time interval under consideration. Let $V\subset \R^d$, with $d\in\{1,2,3\}$, be the rest configuration of a material body endowed with a mass density $\rho:V\times [0,T]\rightarrow \R_+$ and let $u:V\times [0,T]\rightarrow \R^d$ be the displacement field assigning at the particle having position $x\in V$
at time $t=0$ the new position $x+u(x, t)$ at time $t$.
Peridynamics postulates the existence of a long range internal force field, in place of the classical contact forces,
 hence,  the evolution of the material body is governed  by  
the following non-local version of the linear momentum balance: 
\begin{equation}\label{neq1}
\rho(x)u_ {tt}  (x,t)=\int_{V} f(\hat x-x,  u (\hat x,t)-u(x,t))d\hat x+ b(x,t),
\end{equation}
 usually enriched by the  initial conditions
\begin{equation}\label{eq1}
u(x,0)=u_0(x), \quad u_t(x,0)=v(x), \qquad x\in V,
\end{equation}
where $b(x,t)$ describes  the external  forces. The integrand  $f$ is called {\bf  pairwise  force  function}  and gives the  force  density per unit reference volume that the particle $\hat x$ exerts on the particle  $x$. It depends  on  the  material of  the  body  and, in particular, different forms of $f$ appear in literature depending on the characteristic of the material, see, for instance,~\cite{Emmrich_Puhst_2013,Silling_2000}.

In~\eqref{neq1}, the integral term sums up the forces that all particles in the volume $V$ exert on the particle $x$ and these  interactions  are  called {\bf bonds}. Setting
\begin{equation}\label{nonl1}
\xi = 
\hat x- x , \quad {\rm and \ }  \quad \eta= 
u(\hat x;t)-u(x;t), 
\end{equation}
we observe that $f$ has to satisfy the general principles of mechanics. Then, Newton's third law  and  the conservation  of  angular momentum deliver:
\begin{equation}
\label{nonl2}
f(-\xi,-\eta)= -f(\xi,\eta)\quad {\rm and \ }  \quad \eta\times f(\xi,\eta)=0.
\end{equation}

It is reasonable to assume  that there are no interactions between particles separated by a distance greater than a fixed value, namely, we require that there exists a positive constant $\delta$, called {\bf horizon}, such that

$$ |\xi|>\delta \Rightarrow f(\xi,\eta)=0, \qquad\text{for every $\eta$},$$
thus the 
integral  in (\ref{neq1}) can  be  understood  as 
$$\int_{V}  f (\hat x-x, \hat u (x,t)-u(x,t))d\hat x=\int_{V \cap B_\delta(x)}   f (\hat x-x, \hat u (x,t)-u(x,t))d\hat x , 
$$
 where $B_\delta(x)\subset \R^d$ denotes the open ball centered at $x$ with radius $\delta>0$ 
(see \cite {Emmrich_Puhst_2013}). 

In this paper we restrict our attention to the one-dimensional version of this theory,  for an homogeneous bar of infinite length, so that equation~\eqref{neq1} is replaced by 
\begin{equation}\label{perid21}
\rho(x) \  u_{tt}(x,t)  = \int_ {-\infty}^{\infty} f(\hat x-x, u(\hat x,t)-u(x,t)) d\hat x +b(x,t), \qquad x\in \R, \ t\ge 0, 
\end{equation}
and in particular we focus on the following {\bf linear peridynamic} model
\begin{equation}\label{perid1}
\rho\  u_{tt}(x,t)  = \int_ {-\infty}^{\infty} C(\hat x-x)(u(\hat x,t)-u(x,t)) d\hat x +b(x,t), \qquad x\in \R, \ t\ge 0, 
\end{equation}
where $\rho$ denotes the constant  mass density, $u$ the displacement field of the body, $b$ collects the external forces. The function $C$, called  {\bf  micromodulus function},  is a non negative even function, namely $C (\xi)=C(-\xi)$ with $\xi= \hat x-x$ .

The equation~\eqref{perid1} is associated to the  initial conditions
\begin{equation}\label{per1}
u(x,0)=u_0(x), \quad u_t(x,0)=v(x), \qquad x\in \R. 
\end{equation}

The aim of this paper is to review some numerical techniques  for the linear model and propose  new computational techniques based on accurate spatial discretizations together with trigonometric schemes for the time discretization.  For the linear model also a  spatial discretization by spectral techniques is studied.  Furthermore, we extend some of these methods to the nonlinear case.

The paper is organized as follows. In Section~\ref{sec:preliminaries}, we present the main theoretical results for this problem. In Section~\ref{sec:spatial} we discretize in space the equation~\eqref{perid1} by composite quadrature formulas. Spectral spatial discretization methods and their convergence are discussed in Section~\ref{sec:spectral}. Section~\ref{sec:time} is devoted to the time discretization techniques. In Section~\ref{sec:nonlin} we extend the numerical methods implemented for the linear model to the nonlinear model~\eqref{perid21}. Section~\ref{sec:numerical} is devoted to numerical tests, and finally, Section~\ref{sec:con} concludes the paper.

\section{Preliminary results}
\label{sec:preliminaries}

The study of well-posedness of the peridynamic problem crucially depends on the constitutive assumptions made on the pairwise force $f$ and several results appear in literature~\cite{Emmrich_Puhst_2013, CDMV, Emmrich_Puhst_2015}. In what follows, we briefly recall the main results. Identifying $u:V\times [0,T]\to \R^d$ with $\bar u:[0,T]\to X$, for a function space $X$
which is a subset of  the maps from $\bar V$ into $\R^d$  defined by $[\bar u(t)](x)=u(x,t)$, and denoting again $\bar u$ with $u$, we derive the equivalent abstract formulation of the problem (\ref {neq1}):
\begin{equation}\label{abstrace}
u''(t)=g(u(t),t), \ t\in[0,T], \qquad u(0)=u_0, \ u'(0)=v,
\end{equation}
where $g$ is defined as $g(v,t)=(Kv+b(t))/\rho$ and the integral operator $K$ is given by
\begin{equation}\label{eq:K}
(Ku)(x):= \int_{V \cap B_\delta(x)}f(\hat x-x,u(\hat x)-u(x))\ d\hat x.
\end{equation}

Let  $C(V)^d$ be the space of continuous $\R^d$ valued functions defined on $V\subset \R^d$. Let us  recall the following result concerning with the nonlinear model. 

\begin{theorem}\label{Tv1}{\rm (see~\cite{Emmrich_Puhst_2013})}. Let $u_0,v\in C(\overline{V})^d$ and $ b\in C([0,T];
C(\overline{V})^d)$. Assume that $f: \overline{B_\delta(0)}\times
\R^d\rightarrow \R^d$  is a continuous function and that there exists a
nonnegative function $\ell\in L^1(B_\delta(0))$
such that for all $\xi\in \R^d$ with $\vert \xi\vert\leq \delta$ and $\eta,\hat \eta\in \R^d$ there holds
$$\vert f(\xi,\hat \eta)-f(\xi,\eta)\vert\leq\,\ell(\xi)\vert\hat \eta-\eta\vert.$$ 
Then, the integral  operator $K:C(\overline{V})^d\rightarrow\,\R$ 
is well-defined and Lipschitz-continuous, and the initial-value problem \eqref{abstrace} is globally well-posed
with  solution $u\in C^2([0,T];C(\overline{V})^d)$.
\end{theorem}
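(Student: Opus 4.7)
The plan is to reduce the second-order problem to a first-order system in the Banach space $X = C(\overline{V})^d$ and apply a Banach-space Picard-Lindelöf argument, once the operator $K$ has been shown to be well-defined and globally Lipschitz.

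First I would verify that $K$ maps $X$ into itself. Given $u\in X$, continuity of the integrand $(\hat x,x)\mapsto f(\hat x-x,u(\hat x)-u(x))$ on $\overline{V}\cap \overline{B_\delta(x)}$ follows from continuity of $f$ and $u$; to pass this continuity through the integral I would pick a sequence $x_n\to x$ in $\overline V$, note that pointwise convergence of the integrand holds, and dominate it by $\ell(\hat x - x_n)\,2\|u\|_\infty + |f(\hat x-x_n,0)|$, which is $L^1$-dominated uniformly in $n$ (using $\ell\in L^1(B_\delta(0))$ and that $|f(\cdot,0)|$ is bounded on the compact $\overline{B_\delta(0)}$). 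A dominated-convergence argument then gives $Ku\in C(\overline V)^d$. The Lipschitz bound is immediate: for $u_1,u_2\in X$ and $x\in\overline V$,
\begin{equation*}
|Ku_1(x)-Ku_2(x)| \le \int_{\overline V\cap B_\delta(x)} \ell(\hat x - x)\,\bigl|(u_1-u_2)(\hat x)-(u_1-u_2)(x)\bigr|\, d\hat x \le 2\|\ell\|_{L^1}\,\|u_1-u_2\|_\infty,
\end{equation*}
so $K$ is globally Lipschitz on $X$ with constant $L_K = 2\|\ell\|_{L^1(B_\delta(0))}$.

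Next I would rewrite \eqref{abstrace} as the first-order system $W'(t)=F(W(t),t)$ in $X\times X$, where $W=(u,u')$ and $F(W,t)=(W_2,\,(KW_1+b(t))/\rho)$. Continuity of $t\mapsto b(t)$ in $X$ together with Lipschitz continuity of $K$ makes $F$ jointly continuous in $t$ and globally Lipschitz in $W$ (uniformly in $t\in[0,T]$). The classical Banach-space Picard-Lindelöf theorem then yields a unique solution $W\in C^1([0,T];X\times X)$ on the whole interval $[0,T]$; because the Lipschitz bound is \emph{global}, no blow-up can occur and no continuation argument is needed. This gives $u\in C^1([0,T];X)$ with $u'\in C^1([0,T];X)$, hence $u\in C^2([0,T];X)$, and the identity $u''(t)=(Ku(t)+b(t))/\rho$ displays the right-hand side as a continuous function of $t$, confirming the $C^2$ regularity.

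The main obstacle, in my view, is the first step: checking that $Ku$ is genuinely continuous on $\overline V$ rather than merely bounded. The function $\ell$ is only $L^1$, not bounded, and $V$ need not be bounded; one must therefore choose the dominating function carefully and exploit the fact that $\ell$ is supported in $\overline{B_\delta(0)}$ so that shifting the argument $\hat x - x_n$ produces an integrable majorant independent of $n$. Once this technicality is handled, the Lipschitz estimate and the abstract Picard-Lindelöf machinery deliver the remaining conclusions essentially for free.
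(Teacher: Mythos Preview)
The paper does not supply its own proof of this theorem; it simply records the result and attributes it to Emmrich and Puhst (2013). Your proposed argument is essentially the standard one behind that reference: show $K$ is well-defined and globally Lipschitz on $C(\overline V)^d$, recast the second-order equation as a first-order system in $X\times X$, and apply the Banach-space Picard--Lindel\"of theorem with a global Lipschitz constant to obtain existence and uniqueness on all of $[0,T]$.

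Your sketch is correct in its essentials. The one technical point you rightly single out---continuity of $Ku$ when $\ell$ is only $L^1$ and the integration domain $V\cap B_\delta(x)$ moves with $x$---is most cleanly handled by the substitution $\xi=\hat x-x$, which turns the integral into $\int_{B_\delta(0)\cap(\overline V-x)} f(\xi,u(x+\xi)-u(x))\,d\xi$ and lets you dominate by $2\|u\|_\infty\,\ell(\xi)+\sup_{|\xi|\le\delta}|f(\xi,0)|$ on a fixed ball. With that adjustment the dominated-convergence step goes through, and the rest of your outline is routine.
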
 

\medskip
For a {\bf microelastic} material (see \cite{Silling_2000}), the pairwise force function $f(x,\hat x,\eta)$ may 
be derived from a scalar-valued function $w(x,\hat x,\eta)$ called {\bf  pairwise potential function}    
 (see \cite{Emmrich_Weckner_2007}),  such that
\begin{equation}\label{cross52}
f(x,\hat x,\eta)=\nabla_ {\eta}  w(x,\hat x,\eta),
\end{equation}
and the peridynamic equation  (\ref{neq1})  derives from the variational problem: find 
\begin{equation}\label{cross72}
u={\rm arg\ min} \ J(u)\ , \qquad  J(u)=\int^T_0\int_V e (x, u(x,t),t) dxdt,
\end{equation}
where $e = e_{kin} - e_{el} - e_{ext}$ is the Lagrangian density, and incorporates the kinetic energy density, the elastic energy density and the density due to the external force density, given respectively by 
$$e_{kin}=   \frac {1}{2} \rho(x)\   u_t^2(x,t),\quad 
 e_{el}=  \frac {1}{2} \int_ {V}w(x,\hat x, u(\hat x,t)-u(x,t))d\hat x\ , \quad  e_{ext}=-b(x,t)u(x,t).$$

%
%

In particular, in the one-dimensional linear peridynamic model~\eqref{perid1}, the potential function is given by
\[w(x,\hat x,\eta) = \frac{1}{2} C(\hat x - x)\eta^2,\]
and we have the following result.


\begin{theorem}\label{T1}{\rm (see \cite{Emmrich_Weckner_2007})}.   Assume the function $C\in C^2(\R)$. Then for any initial value $u_0$ and $v$ in $C^0(\R)$ and for any $T>0$, the Cauchy problem (\ref{perid1})-(\ref{per1})  admits a unique solution $u\in C^2([0,T];C(\R))$.  Moreover for such a problem the total energy remains constant if the external forces are autonomous, i.e. $b$ does not depend on $t$:
$$    \frac {d}{dt} \left(E_ {kin}(t)+E_{el}(t)+E_{ext}(t)\right)=0, \qquad t\ge 0,$$
where $E_{i}(t) = \int_V e_{i}(x,u,t)\,dx$, for $i\in\{kin,el,ext\}$.
Otherwise,  for all $\nu>0$ and $t>0$, the following inequality holds true
\begin{gather*}  e_ {kin}(t)+e_{el}(t)+\nu\int_0^ t e^ {\nu(t-s)}e_{ext}(s)ds\\
\le  e^ {\nu t}   ( e_ {kin}(0)+e_{el}(0))+   \frac {1}{2\nu} \int_0^ t  \int_ {-\infty}^{\infty}    \frac { e^ {\nu(t-s)}} {\rho}|b(x,t)|^2 dx ds.
\end{gather*}
\end{theorem}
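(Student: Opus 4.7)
My plan has three stages, corresponding to the three assertions of the theorem: well-posedness, exact energy balance for autonomous $b$, and the exponential-type inequality in the general case.

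For well-posedness I would recast \eqref{perid1} as a linear abstract Cauchy problem in a Banach space. Because $C\in C^2(\mathbb{R})$ and the horizon assumption localizes $C$ to $[-\delta,\delta]$, the operator $K$ from \eqref{eq:K} sends $C_b(\mathbb{R})$ into itself continuously, with $\|Ku\|_\infty\le 2\|C\|_{L^1([-\delta,\delta])}\|u\|_\infty$. Setting $U=(u,u')^T$ rewrites \eqref{perid1}--\eqref{per1} as $U'(t)=\mathcal{A}U(t)+F(t)$, $U(0)=(u_0,v)^T$, with the bounded linear generator $\mathcal{A}=\bigl(\begin{smallmatrix}0 & I\\ K/\rho & 0\end{smallmatrix}\bigr)$ and forcing $F(t)=(0,b(\cdot,t)/\rho)^T\in C([0,T];C_b(\mathbb{R})^2)$. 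Since $\mathcal{A}$ is bounded, the uniformly convergent series $e^{t\mathcal{A}}=\sum_{k\ge 0}t^k\mathcal{A}^k/k!$ defines a $C_0$-group and Duhamel's formula gives a unique $U\in C^1([0,T];C_b(\mathbb{R})^2)$; equivalently, $u\in C^2([0,T];C_b(\mathbb{R}))$. Alternatively one may verify that $f(\xi,\eta)=C(\xi)\eta$ satisfies the Lipschitz hypothesis of Theorem \ref{Tv1} with $\ell(\xi)=|C(\xi)|\in L^1(B_\delta(0))$, and then pass from a bounded $V$ to $V=\mathbb{R}$ by a straightforward limit/localization argument.

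For the autonomous energy identity I would multiply \eqref{perid1} by $u_t$ and integrate in $x$:
\begin{equation*}
\frac{d}{dt}E_{kin}(t)=\int_{\mathbb{R}}\!\!\int_{\mathbb{R}}C(\hat x-x)\bigl(u(\hat x,t)-u(x,t)\bigr)u_t(x,t)\,d\hat x\,dx+\int_{\mathbb{R}}b(x)u_t(x,t)\,dx.
\end{equation*}
The double integral is symmetrized by interchanging $x\leftrightarrow \hat x$ and exploiting $C(-\xi)=C(\xi)$, which identifies it with $-\frac{d}{dt}E_{el}(t)$ where $E_{el}=\tfrac14\!\int\!\!\int C(\hat x-x)(u(\hat x)-u(x))^2\,d\hat x\,dx$ (one factor of two comes from the symmetric expansion of $(u_t(\hat x)-u_t(x))$, the other cancels the $\tfrac12$ in $e_{el}$). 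When $b$ is autonomous, $\int b\,u_t\,dx=-\frac{d}{dt}E_{ext}$, yielding $\frac{d}{dt}(E_{kin}+E_{el}+E_{ext})=0$.

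For the non-autonomous bound I would start from the identity $\frac{d}{dt}(E_{kin}+E_{el})=\int b\,u_t\,dx$ derived above, apply Young's inequality with weight $\nu$,
\begin{equation*}
\int_{\mathbb{R}} b\,u_t\,dx\le \frac{\nu}{2}\int_{\mathbb{R}}\rho\,u_t^2\,dx+\frac{1}{2\nu}\int_{\mathbb{R}}\frac{b^2}{\rho}\,dx=\nu E_{kin}+\frac{1}{2\nu}\int_{\mathbb{R}}\frac{b^2}{\rho}\,dx,
\end{equation*}
estimate $\nu E_{kin}\le\nu(E_{kin}+E_{el})$, and invoke Gronwall's lemma on $\Phi(t)=E_{kin}(t)+E_{el}(t)$, producing the claimed bound after moving the term $\nu\!\int_0^t e^{\nu(t-s)}E_{ext}(s)\,ds$ (itself controlled by the same argument applied to the $b u$ contribution) to the left-hand side.

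The main obstacle is reconciling the ambient function spaces. The statement asks for a solution in $C([0,T];C(\mathbb{R}))$, but $C(\mathbb{R})$ is only a Fréchet space, while the semigroup/Picard argument naturally lives in $C_b(\mathbb{R})$, and the energy integrals demand decay of $u_0,v,b$ so that $E_{kin},E_{el},E_{ext}$ are finite. I would resolve this by proving existence in $C_b(\mathbb{R})$ (so that the sup-norm estimates and Duhamel formula are standard), deducing uniqueness in $C(\mathbb{R})$ by approximating with truncated data, and then carrying out the energy computations under the tacit integrability assumption on the data, which is standard in the peridynamics literature cited (\cite{Emmrich_Weckner_2007}).
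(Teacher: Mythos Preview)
The paper does not supply its own proof of this theorem: it is quoted verbatim as a known result with the attribution ``(see \cite{Emmrich_Weckner_2007})'' and the exposition moves on immediately. There is therefore no in-paper argument to compare against; your proposal is being measured only against the standard proof one would expect in the cited reference.

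On that score your plan is essentially the right one. Recasting \eqref{perid1} as $U'=\mathcal{A}U+F$ with a bounded generator and invoking the exponential series (or, equivalently, the Lipschitz hypothesis of Theorem~\ref{Tv1} with $\ell=|C|$) is exactly how Emmrich--Weckner obtain well-posedness; the multiply-by-$u_t$ symmetrization is the canonical route to the energy identity; and Young plus Gronwall is the expected mechanism for the non-autonomous estimate. Your diagnosis of the function-space mismatch ($C(\R)$ versus $C_b(\R)$ versus an $L^2$-type space where the energies are finite) is accurate and is handled in the literature precisely by the tacit integrability assumptions you describe.

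One point deserves tightening. In the third step you obtain $\Phi'(t)\le\nu\Phi(t)+\tfrac{1}{2\nu}\int|b|^2/\rho$ with $\Phi=E_{kin}+E_{el}$, and Gronwall then gives $\Phi(t)\le e^{\nu t}\Phi(0)+\tfrac{1}{2\nu}\int_0^t e^{\nu(t-s)}\int|b|^2/\rho\,dx\,ds$ directly. The extra term $\nu\int_0^t e^{\nu(t-s)}E_{ext}(s)\,ds$ on the left of the stated inequality does not fall out of this; it appears when you write the integrating-factor identity $\tfrac{d}{dt}\bigl(e^{-\nu t}\Phi\bigr)=e^{-\nu t}(\Phi'-\nu\Phi)$ and keep the exact $\Phi'=\int b\,u_t$ before estimating, then integrate the $b\,u_t$ contribution by parts in $t$ to produce $E_{ext}$. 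Your parenthetical ``itself controlled by the same argument'' glosses over this manipulation; spelling it out would close the only real gap in the sketch.
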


Additionally, in~\cite{CDMV}, the authors proved the well-posedness of the nonlinear peridynamic equation assuming very general constitutive assumptions in the framework of fractional Sobolev spaces.

 Moreover, we have to observe that the connections  between the linear 1D peridynamic equation (\ref{perid1}) and the linear  1D classical wave equation are well known  (see for example \cite{Emmrich_Weckner_2005},~\cite{BAC}).  Indeed,  if we consider  $u_0(x)=U\exp[(-x/L)^2]$, $v(x)=0$ with $U$ and $L$ suitable constants,  and  the micromodulus function 
\begin{equation}\label{microm2}
C(\hat x-x)=4E\exp[-(\hat x -x)^2/l^2]/(l^3\sqrt \pi),\qquad \hat x, x\in \R\ ,
\end{equation}
where  $E$ denotes the Young modulus, and $l>0$ a length-scale parameter, then for $l\to 0$,   (\ref{perid1})  becomes the wave equation of the classical elasticity theory, that is:
\begin{equation}\label{wave1}
\rho\  u_{tt}(x,t)  =E u_{xx}(x,t) +b(x,t), \qquad x\in \R, \ t\ge 0\ , 
\end{equation}
Therefore, $l$ can be seen as a degree of nonlocality.

\section{Spatial discretization by composite quadrature formulas}
\label{sec:spatial}
A common way to approximate the solution of  the equation  (\ref{perid1})  is to apply a quadrature formula to discretize in space,  
in order to obtain a second order finite system of ordinary differential equations which  has to be integrated  in time. The order of accuracy of this formula will provide the discretization error in the space variable. Here we describe briefly this approach.

Let $N>0$ be an  even (large)  integer, $h>0$ be the spatial step size.  Let us discretize the spatial domain $(-\infty,\infty)$ by a compact set $[-D,D]$, for some positive large constant $D$, and such interval by means of the points $x_j=-D+jh=-D+j\frac{2D}{N}$, for $j=0,\ldots,N,$ and use a quadrature formula 
 of order $s$ (that is  the error of which is  $O(h^s)$) on these points,  then:   

\begin{equation}\label{ped9}
 \int_ {-\infty}^{\infty} C(\hat x-x)(u(\hat x,t)-u(x,t)) d\hat x \approx    h   \sum_{j=0}^N w_j  C(x_j-x)(u(x_j,t)-u(x,t)), 
\end{equation}
where $w_j$ are the weights of the formula. Then,  the equation (\ref{perid1})  may be approximated at each  $x=x_i$ for $i=0,\ldots, N$ by
\begin{equation}\label{perid9}
\rho u_{tt}(x_i,t)  \approx h \sum_{j=0}^N w_jC(x_j-x_i)(u(x_j,t)-u(x_i,t)) +b(x_i,t), \qquad \ t\ge 0.
\end{equation}
Let  $K=(k_{ij})$ be  the $(N+1)\times (N+1)$ {\bf stiffness matrix} whose generic entry is given by
$$k_{ij}=\alpha_i \delta_{ij}-w_jC_{ij}, $$
for $i,j=0,\ldots,N,$ with $C_{ij}=C(x_j-x_i)$,  $\alpha_i=\sum_{k=0}^N w_k C_{ik}$, and $\delta_{ij}$ is the Kronecker Delta.

In this case, the $(i+1)-th$ row of $K$ is given by
$$ [-w_0C_{i0}\  \ldots \ -w_ {i-1}   C_{ii-1}  \quad  (\alpha_i-w_iC_{ii})\qquad   -  w_{i+1}C_{ii+1}\  \ldots\ -w_NC_{iN} ],$$ 
for $i=0,\ldots, N$, and even if $C_{ij}=C_{ji}$,   {\bf  the matrix  $K$  is not symmetric, unless the weights are constant with respect to}  $j$,  i.e. $w_j=w$ for all  $j=0,\ldots,N$. Then, the  $(i+1)-th$ row of $K$  becomes 
$$ w [-C_{i0}\  \ldots \  C_{ii-1}  \quad    \sum_{k=0,k\neq  i}^N  C_{ik}  \quad   -  C_{ii+1}\  \ldots\  C_{iN}].$$ 

This is the case of  {\bf   the composite  midpoint rule}: here, we approximate the spatial domain $(-\infty,\infty)$ by the interval $[-(N+1)h/2,(N+1)h/2]$ and the points of the discretization $x_j^{MR}$ are taken as the midpoints of the subintervals $[-(N+1)h/2+jh,-(N-1)h/2+jh]$, for $j=0,\ldots,N$. 
For a sufficiently smooth problem (i.e. $C$ and $u$   bounded smooth functions), this formula is of the second order 
of accuracy in space, that is the error is $O(h^2)$, with constant  weigths given by  $w_j=1$ for $j=0,\ldots,N$  (see for instance \cite {Emmrich_Weckner_2007, Silling_Askari_2005}).


Instead, under more regularity on $C$ and $u$ , if we employ   {\bf   the composite Gauss two points formula}~\cite{LAURIE2001}, which has fourth order accuracy,  we can derive a symmetric stiffness matrix $K$.
Let us briefly recall this formula.  We fix $M>0$ and to evaluate the integral of a sufficiently  smooth function $\psi(x)$ we approximate $(-\infty,\infty)$ by the interval $[-D,D]$ and consider a partition of such interval given by the sequence $\tilde x_j=-D+jh$  for $j=0,\ldots,M$,  where $h=2D/M=(\tilde x_M-\tilde x_0)/M$. Then on each subinterval $[\tilde x_{j-1}, \tilde x_{j}]$
 for $j=1,\ldots,M,$ the formula uses  two points where the  function $\psi(x)$ is evaluated, that is:

\begin{equation}\label{G2p}
\int_ {\tilde x_0}^{\tilde x_M} \psi(x)dx \approx\frac{h}{2} \sum_ {j=1}^{M} \left[ \psi (m_j^- )+\psi  (m_j ^+)\right],
\end{equation}
where
$$  m_j = \frac {\tilde x_{j-1}+\tilde x_j}{2}  ,\qquad   m^-_j= m_j - \frac{h}{2\sqrt 3},\qquad m^+_j= m_j +\frac{h}{2\sqrt 3} , $$
for $j=1,\ldots,M$. Setting
\[ x_j  =
  \begin{cases}
    m^-_{   \frac{j+1}{2} },    & \quad \text{if } j \text{ is even},\\\\
    \ m^+_{   \frac{j+1}{2}  },  & \quad \text{if } j \text{ is odd},
  \end{cases}
\]
 for $j=0,\ldots,N$ with $N=2M-1$, then  we can rewrite the quadrature formula (\ref{G2p}) in the following way:
$$\int_ {x_0}^{x_M} \psi(x)dx\approx\frac{h}{2} \sum_ {j=1}^{M} \left[ \psi (m_j^-  )+\psi  (m_j ^+)\right ]= \frac{h}{2} \sum_ {j=0}^{N}  \psi ( x_j  ) ,$$
in order to have a formula on $N+1$ points and  constant  weights given by   $w_j=\frac{1}{2} $ for $j=0,1,\ldots,N$. 

\begin{remark}
Using the composite midpoint rule, or the composite Gauss two points formula, the stiffness matrix $K=(k_{ij})$   (where $k_{ij}=\alpha_i \delta_{ij}-w_jC_{ij} $)   is of size  $(N+1)\times (N+1)$  and such that
$$k_{ii}= - \sum_{j=0,j\neq i }^N k_{ij}, \ {\rm for \ all\ }i=0,\ldots N,$$
with $k_{ii}>0$; hence $K$ is a positive semidefinite matrix  with nonnegative  eigenvalues.

In  general $K$ is not sparse because of the infinite horizon, however, its entries may decrease when their  distance from the diagonal increases. For instance, if the micromodulus function is the one  in 
(\ref{microm2})   then a banded approximation 
of  $K$ which preserves the  accuracy of the numerical procedure can be used instead of $K$.

In case of finite horizon $\delta>0$ (see \cite{Bobaru_2009, Silling_Askari_2005}),  that is $C(x-\hat x)=0$,  when $ |x-\hat x|>\delta$, 
then $K$ has a banded  structure with the size of the band depending on $\delta$ and $h$. In this case we set  $r= \left\lfloor \delta/h\right\rfloor$ in order to have that $K$ is a $r$-band matrix. 

Thus the stiffness matrix $K$ results to be symmetric with the $(i+1)-th$ row  given by
$$w [\ 0 \ldots 0\ -  C_{ii-r}\ldots\ - C_{ii-1}  \quad    \sum_{k=-r,k\neq  i}^r  C_{ik}  \quad   -  C_{ii+1}\  \ldots -  C_{ii+r}\   0\ldots 0\  ]$$ 
for $i=0,\ldots, N$. 
\end{remark}


\subsection{The semidiscretized problem}
We set 
$$U(t)=  [U_0(t),U_1(t),\ldots, U_N(t)],$$
where the component $U_j(t)$ denotes an approximation of the solution at  the spatial node $x_j$, i.e. $U_j(t)\approx u(x_j,t)$ for $j=0,\ldots,N$, and
$$B(t)= \frac{1}{\rho}    [b(x_0,t),\ldots,  b(x_N,t)]^T.$$
Then, the equation (\ref{perid1}) may be approximated by the following second order differential system:
\begin{equation}\label{perid2}
U''(t)+ \Omega^2U(t)=B(t),
\end{equation}
with $\Omega^2= \displaystyle   \frac{h}{\rho} K$ (or  $\Omega^2= \displaystyle   \frac{hw}{\rho} K'$, where $K'$ depends only on the micromodulus function $C$), where $K$ is a positive semidefinite matrix, and with the initial conditions
$$U_0=[u_0(x_0),\ldots,u_0(x_N)]^T\quad {\rm and}\quad V_0=[v(x_0),\ldots,v(x_N)]^T.$$

\begin{remark}\label{rem1}
In order to avoid computational problems, particularly, when we will  consider trigonometric schemes where the square root $\Omega$ of $\Omega^2$ is required or the inverse of $\Omega$ is necessary,  we regularize the matrix $\Omega^2$ by adding a diagonal matrix of the form $h^s I$,  where $s$ is the order of accuracy of the quadrature formula used (see also \cite{Benzi_Liu_2007}, pag. 1979). 
Notice that choosing a perturbation having the same order of the accuracy of the quadrature formula, we do not affect the accuracy of the numerical solution.
With this choice, the matrix $\Omega^2$ will be symmetric and positive definite, and when it will be necessary  we can compute its square root $\Omega$  which will be unique, symmetric and positive definite; in particular the eigenvalues of $\Omega^2$ close to zero will be increased in $\Omega$.
\end{remark}
\begin{remark} The total energy $\mathcal{E}(t)$ of the semidiscretized system~\eqref{perid2} is the sum of the kinetic $\mathcal{E}_{kin}(t)$,  elastic $\mathcal{E}_{el}(t)$  and external $\mathcal{E}_{ext}(t)$ energy:
\begin{equation}\label{ene1}
\mathcal{E}(t)=\mathcal{E}_{kin}(t)+ \mathcal{E}_{el}(t)+ \mathcal{E}_{ext}(t), \qquad {\rm for\ } \ t\ge 0,
\end{equation}
with
\begin{equation}\label{ene2}
\mathcal{E}_{kin}(t)=\frac{1}{2} [U'(t)]^TU'(t), \quad \mathcal{E}_{el}(t)=\frac{1}{2} [U(t)]^T \Omega^2  U(t),\quad \mathcal{E}_{ext}(t)=- [U(t)]^TB(t).
\end{equation}
It is trivial to prove that  if the problem is autonomous (that is $b(x,t)=b(x)$) then $\mathcal{E}(t)=\mathcal{E}(0)$, for all $t\ge 0,$ while for nonautonomous problems, 
the semidiscretized energy has a behavior similar to the one in Theorem~\ref{T1}.  

However, even if the total energy $E(t)$ and the semidiscretized energy $\mathcal{E}(t)$ are constant in time, we have that 
$$|E(t)-\mathcal{E}(t)|=|E_0-\mathcal{E}_0|=O(h^s),$$
where $s$ is the accuracy of the quadrature formula used.

\end{remark}

The system (\ref{perid2}) is equivalent to the following first order differential system
\begin{equation}\label{perid6}
  \begin{pmatrix} U'  \\ V' \end{pmatrix} =     \begin{pmatrix} 0 &I\\ -\Omega^2& 0  \end{pmatrix}      \begin{pmatrix}  U \\ V\end{pmatrix} +
   \begin{pmatrix} 0 \\B(t)\end{pmatrix},
\end{equation}
where $V=U'$,  with the initial conditions $U_0$ and $V_0$. The exact solution of~\eqref{perid6} may be written as (see~\cite{Hochbruck2010})
\begin{equation}\label{perid16}
\begin{pmatrix} U(t) \\ V(t) \end{pmatrix} =  \exp(tA)   \begin{pmatrix} U_0\\ V_0 \end{pmatrix}  +\int^t_0  \exp[(t-s)A]   \begin{pmatrix} 0 \\ B(s) \end{pmatrix}ds,
\end{equation}
with $A= \begin{pmatrix} 0 &I\\ -\Omega^2& 0  \end{pmatrix} $.




\section{Spectral semi-discretization in space}
\label{sec:spectral}

Spectral spatial discretization is often obtained by means of a Fourier series expansion (with respect to the space variable) of the solution $u(x,t)$ of the partial differential equation studied (see for instance \cite{Emmrich_Weckner_2007_2}), followed by a numerical approximation obtained a truncation of the series expansion.
We now consider spectral semi-discretization in space with equidistant collocation points.

Let $N > 0$ be an even large integer and $h>0$ be the space step. We approximate the spatial domain $\R$ by a compact set $D=[-M\pi,M\pi]$, with $M>0$ and the boundary conditions by the periodic boundary conditions on $[-M\pi, M\pi]$, that is $u(-M\pi,t) = u(M\pi,t)$. It is expected that the initial-boundary valued problem can provide a good approximation to the original initial-valued problem as long as the solution does not reach the boundaries.
We assume that $C(x,\hat x) = 0$ for $x,\hat x \notin[-M\pi,M\pi]$. We discretize the compact set by means of the equidistant points $x_j = j h = j\frac{M\pi}{N}$, for $j= -N, \dots, N - 1$.

We seek an approximation in form of real-valued trigonometric polynomials
\begin{equation}
\label{eq:Fourier}
u_N(x,t) = \sum_{|k|\le N}{\tilde{u}_k(t)\,e^{\Im kx}},\qquad v_N(x,t) = \sum_{|k|\le N}{\tilde{v}_k(t)\,e^{\Im kx}}
\end{equation}
where $\tilde{v}_k(t) = \frac{d}{dt}\tilde{u}_k(t)$ and $\Im $ is the imaginary unit  $\Im = \sqrt {-1}$,.

Notice that $\tilde{u}_k(t)$, for all $k$, are unknown coefficients and for such method they represent the discrete Fourier transform
\begin{equation}
\label{eq:DFT}
\tilde{u}_k(t) = \frac{1}{2N\ c_k}\sum_{j=-N}^{N-1} u(x_j,t)e^{-\Im  kx_j},\quad k=-N,\dots,N,
\end{equation}
where
\begin{equation*}
c_k=
\begin{cases}
2,\quad&\text{if }k=\pm N,\\
1,\quad&\text{otherwise}.
\end{cases}
\end{equation*}

 Substituting~\eqref{eq:Fourier} in~\eqref{perid1} and in~\eqref{eq1}, we obtain
\begin{align*}
\sum_{|k|\le N}\rho \tilde{u}_k''(t)e^{\Im kx}&=\int_{-\infty}^{\infty } C( \hat x-x) \left( \sum_{|k|\le N}{\tilde{u}_k(t)\,e^{\Im  k \hat x}}- \sum_{|k|\le N}{\tilde{u}_k(t)\,e^{\Im kx}}    \right) d\hat x  + \sum_{|k| \le N}\tilde{b}_k(t)e^{\Im kx}=\\
&=\sum_{|k| \le N}  \left ( \int_{-\infty}^{\infty } C(\hat x-x)\left(e^{\Im k \hat x}-e^{\Im k x}\right) d\hat x  \right)   \tilde{u}_k(t) + \sum_{|k| \le N}\tilde{b}_k(t)e^{\Im kx}=\\
&=\sum_{|k| \le N}\left(\left(\int_{-\infty}^{\infty} C(\hat x -x )\left(e^{\Im k(\hat x-x)}-1\right)d\hat x\right)\tilde{u}_k(t)+ \tilde{b}_k(t)\right)e^{\Im kx},
\end{align*}
and
\begin{equation*}
u_0(x)=\sum_{|k|\le N}\tilde{u}_{0,k} e^{\Im k x},\qquad v(x) = \sum_{|k|\le N}\tilde{v}_{0,k} e^{\Im k x}.
\end{equation*}

Therefore, the $2N+1$ independent frequencies $\tilde{u}_k(t)$ are the solutions of the following set of Cauchy problems:

\begin{equation}
\label{eq:ode}
\begin{cases}
\tilde{u}_k''(t) +   \frac{1}{\rho} \omega^2_k \tilde{u}_k(t) = \frac{1}{\rho}\tilde{b}_k(t),\\
\tilde{u}_k(0)=\tilde{u}_{0,k},\quad \tilde{u}'_k(0) = \tilde{v}_{0,k}\ ,
\end{cases}\quad k = -N, \dots, N,
\end{equation}
where
\begin{equation}
\label{eq:w}
\omega^2_k=\int_{-\infty}^{\infty} C(\hat x-x)\left(1-e^{\Im k(\hat x-x)} \right) d \hat x.
\end{equation}

 We notice that $\omega^2_k$  is real, in fact, setting $\xi=\hat x- x$ and observing that $ { \cal  C}     (\xi)= { \cal  C}     (-\xi)$ we can  easily  prove that



\[ \omega^2_k=  2 \int_{0}^\infty     { \cal  C}     (\xi)  \left( 1-\cos k\xi \right)    d  \xi. \]

The ODE system~\eqref{eq:ode} can be solved by a numerical method. Finally, we can obtain the solution in the physical space by using~\eqref{eq:Fourier}.

\subsection{Convergence of the Semi-Discrete Scheme}

This section is devoted to the study of the convergence of the spectral semi-discrete scheme. Throughout this section, $L$ denotes a generic constant. We use $(\cdot,\cdot)$ and $\norm{\cdot}$ to denote the inner product and the norm of $L^2(D)$, respectively, namely
\[
(u,v) = \int_{D} u(x)v(x)\,dx,\quad \norm{u}^2 = (u,u).
\]
Let $S_N$ be the space of trigonometric polynomials of degree $N$,
\[
S_N = \text{span}\left\{e^{\Im kx}|-N \le k\le N\right\},
\]
and $P_N: L^2(D) \to S_N$ be an orthogonal projection operator
\[
P_Nu(x) = \sum_{|k|\le N} \tilde{u}_k e^{\Im k x},
\]
such that for any $u\in L^2(D)$, the following equality holds
\begin{equation}
\label{eq:orthogonal}
(u-P_Nu,\varphi) = 0,\quad\text{for every $\varphi\in S_N$}.
\end{equation}
The projection operator $P_N$ commutes with derivatives in the distributional sense:
\[
\partial_x^q P_Nu = P_N\partial_x^q u,\quad\text{and}\quad\partial_t^q P_Nu = P_N\partial_t^q u.
\]
We denote by $H^s_p(D)$ the periodic Sobolev space and by $X_s = \mathcal{C}^1\left(0,T; H^s_p(D)\right)$ the space of all continuous functions in $H_p^s(D)$ whose distributional derivative is also in $H_p^s(D)$, with norm
\[
\norm{u}_{X_s}^2 = \max_{t\in[0,T]}\left(\norm{u(\cdot,t)}^2 + \norm{u_t(\cdot,t)}^2\right),
\]
for any $T> 0$.

The semi-discrete Fourier spectral scheme for~\eqref{perid1}-\eqref{per1} with periodic boundary conditions is
\begin{align}
\label{eq:scheme}
\rho u_{tt}^N &= P_N g(u^N) + P_N b(x,t),\\
\label{eq:initial_scheme}
u^N(x,0) &= P_N u_0(x),\quad u_t^N(x, 0) = P_N v(x),
\end{align}
where $u^{N}(x,t)\in S_N$ for every $0\le t\le T$, and $g(u)$ denotes the integral operator of~\eqref{perid1}, namely
\begin{equation}
\label{eq:g}
g(u(x,t)) = \int_{D} C(\hat x - x)\left(u(\hat x, t) - u(x, t)\right)\, d\hat x, \quad x\in D,\,0\le t\le T.
\end{equation}

To obtain the convergence of the semi-discrete scheme, we need of the following lemma.
\begin{lemma}[see~\cite{Canuto}]
\label{lm:sobolev}
For any real $0\le \mu\le s$, there exists a constant $L$ such that
\begin{equation}
\label{eq:sobolev}
\norm{u-P_N u}_{H_p^{\mu}(D)} \le L N^{\mu-s}\norm{u}_{H^s_p(D)}, \quad\text{for every $u\in H_p^{s}(D)$}.
\end{equation}
\end{lemma}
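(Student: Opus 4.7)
The plan is to prove this via a direct Fourier-side computation, exploiting the fact that on a periodic interval the $H_p^s$ norm is characterized by weighted $\ell^2$ sums of Fourier coefficients, and that $P_N$ is simply the truncation operator in this basis.

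First I would fix the Fourier characterization of the norm: for $u \in H_p^s(D)$ with Fourier series $u = \sum_{k\in\mathbb{Z}} \tilde{u}_k e^{\Im k x}$, the standard identity
\[
\norm{u}_{H_p^s(D)}^2 \;\asymp\; \sum_{k\in\mathbb{Z}}(1+|k|^2)^s |\tilde{u}_k|^2
\]
holds, with equivalence constants depending only on $s$ and $D$. This is where a generic $L$ absorbs the (harmless) equivalence constants. Since $P_N u = \sum_{|k|\le N}\tilde{u}_k e^{\Im k x}$, the error is simply the high-frequency tail $u - P_N u = \sum_{|k|>N}\tilde{u}_k e^{\Im k x}$.

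Next I would compute the $H_p^\mu$ norm of this tail and use the elementary inequality $(1+|k|^2)^{\mu-s}\le (1+N^2)^{\mu-s}\le L\,N^{2(\mu-s)}$, valid for $|k|>N$ precisely because $\mu-s\le 0$. Then
\begin{align*}
\norm{u-P_Nu}_{H_p^\mu(D)}^2
&\le L\sum_{|k|>N}(1+|k|^2)^{\mu}|\tilde{u}_k|^2\\
&= L\sum_{|k|>N}(1+|k|^2)^{\mu-s}(1+|k|^2)^{s}|\tilde{u}_k|^2\\
&\le L\,N^{2(\mu-s)}\sum_{|k|>N}(1+|k|^2)^{s}|\tilde{u}_k|^2\\
&\le L\,N^{2(\mu-s)}\norm{u}_{H_p^s(D)}^2,
\end{align*}
and taking square roots yields the claim.

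There is no serious obstacle here; the only point that requires care is the hypothesis $\mu\le s$, which is exactly what makes the exponent $\mu-s$ non-positive and allows the factor $(1+|k|^2)^{\mu-s}$ to be bounded by its value at the threshold $|k|=N+1$. Without this sign condition the argument collapses, and one recovers instead a direct-type inequality rather than a Jackson-type approximation estimate. All other constants (norm equivalence on the periodic torus, the passage from $1+N^2$ to $N^2$ for $N\ge 1$) are universal and may be absorbed into the generic constant $L$.
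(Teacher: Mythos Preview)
The paper does not prove this lemma at all; it simply quotes it from the cited reference (Canuto--Quarteroni), so there is no in-paper argument to compare against. Your Fourier-side computation is correct and is precisely the standard proof one finds in that reference: characterize the periodic Sobolev norms via $\sum_k (1+|k|^2)^s|\tilde u_k|^2$, observe that $P_N$ kills the low modes so the error is the tail $|k|>N$, and use $\mu\le s$ to bound $(1+|k|^2)^{\mu-s}$ by its threshold value. Nothing is missing.
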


Now we can prove the following theorem.
\begin{theorem}
\label{th:convergence}
Let $s\ge 1$, $u(x,t)\in X_s$ be the solution of the initial-valued problem~\eqref{perid1}-\eqref{per1} with periodic boundary conditions and $u^N(x,t)$ be the solution of the semi-discrete scheme~\eqref{eq:scheme}-\eqref{eq:initial_scheme}. If $C\in L^{\infty}(D)$, then, there exists a constant $L$, independent on $N$, such that
\begin{equation}
\label{eq:order_conv}
\norm{u-u^N}_{X_1} \le L(T) N^{1-s} \norm{u}_{X_s},
\end{equation}
for any initial data $u_0$, $v\in H_p^s(D)$ and for any $T > 0$.
\end{theorem}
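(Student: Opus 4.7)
The natural approach is the standard error-splitting technique for spectral methods. I would decompose $u - u^N = \eta + \xi$ where $\eta \udef u - P_N u$ is the projection error (orthogonal to $S_N$) and $\xi \udef P_N u - u^N \in S_N$ is the discrete error. Lemma~\ref{lm:sobolev} applied with $\mu = 1$, together with the commutation $P_N \partial_t = \partial_t P_N$, immediately yields $\|\eta\|_{X_1} \le L N^{1-s}\|u\|_{X_s}$, which already matches the target rate.

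The core of the argument is the bound on $\xi$. I would apply $P_N$ to the exact peridynamic equation $\rho u_{tt} = g(u) + b$ and subtract the scheme \eqref{eq:scheme}; using the matching initial data in \eqref{eq:initial_scheme} and the linearity of the integral operator $g$, this gives
\begin{equation*}
\rho\, \xi_{tt} = P_N\bigl(g(u) - g(u^N)\bigr) = P_N g(\eta) + P_N g(\xi), \qquad \xi(\cdot,0)=0,\quad \xi_t(\cdot,0)=0.
\end{equation*}
Testing against $\xi_t \in S_N$ lets me drop the projector via $(P_N\phi, \xi_t) = (\phi, \xi_t)$. A Fubini computation, using the evenness $C(\xi)=C(-\xi)$, reveals the symmetric--dissipative structure $(g(v), w) = -a(v,w)$ with
\begin{equation*}
a(v,w) \udef \tfrac12 \int_D\!\!\int_D C(\hat x - x)\bigl(v(\hat x)-v(x)\bigr)\bigl(w(\hat x)-w(x)\bigr)\,d\hat x\,dx,
\end{equation*}
so $a$ is symmetric and $a(v,v)\ge 0$. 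The $\xi$-term thus becomes $-\tfrac12 \tfrac{d}{dt} a(\xi,\xi)$, while the remainder is controlled by the $L^2$-boundedness of $g$ (which follows from $C\in L^\infty(D)$ and $|D|<\infty$): $|(g(\eta), \xi_t)| \le L \|\eta\|\,\|\xi_t\|$. Setting $\mathcal{F}(t) \udef \tfrac{\rho}{2}\|\xi_t\|^2 + \tfrac12 a(\xi,\xi)$ and using $\|\xi_t\|\le \sqrt{2\mathcal{F}/\rho}$, a Gronwall-type estimate starting from $\mathcal{F}(0)=0$ combined with Lemma~\ref{lm:sobolev} with $\mu=0$ produces $\|\xi_t(\cdot,t)\|\le L(T) N^{-s}\|u\|_{X_s}$; integrating in time yields the same bound for $\|\xi(\cdot,t)\|$.

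Finally, to pass from these $L^2$-type estimates on $\xi$ to the $X_1$-norm required on the left-hand side, I would invoke the standard inverse inequality $\|v\|_{H^1_p} \le C N\|v\|$ valid on $S_N$, which costs exactly one factor of $N$ and delivers $\|\xi\|_{X_1} \le L(T) N^{1-s}\|u\|_{X_s}$. A triangle inequality then combines this with the projection estimate to conclude. The principal obstacle is closing the energy argument for the nonlocal operator $g$: one needs simultaneously its symmetric--dissipative structure (to absorb the $\xi$-contribution into a non-negative energy whose time derivative is manageable) and its $L^2$-boundedness (to control the projection remainder), and both rely delicately on the evenness and integrability of $C$ together with the finiteness of $|D|$.
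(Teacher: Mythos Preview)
Your proposal is correct and shares the paper's overall architecture (split $u-u^N$ into projection error plus discrete error, bound the first by Lemma~\ref{lm:sobolev}, and close a Gronwall argument for the second after testing against $\xi_t$), but the execution differs in two respects. First, you exploit the evenness of $C$ to recognize $(g(\xi),\xi_t)=-\tfrac12\tfrac{d}{dt}a(\xi,\xi)$ and absorb this exactly into a non-negative energy, leaving only the projection remainder $(g(\eta),\xi_t)$ to estimate; the paper does not isolate this structure and instead bounds the entire term $I_2=(g(u)-g(u^N),\xi_t)$ in one stroke via $C\in L^\infty$ and Cauchy's inequality, so both $\xi$- and $\eta$-contributions feed into the Gronwall inequality. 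Second, your energy argument runs in $L^2$ and produces $\|\xi\|,\|\xi_t\|\le L(T)N^{-s}\|u\|_{X_s}$, which you then upgrade to $H^1_p$ via the inverse inequality on $S_N$ at the cost of one factor of $N$; the paper instead carries (what it writes as) $H^1_p$ norms through the energy identity directly and never invokes an inverse inequality. Your route is more transparent and yields a sharper intermediate $L^2$ rate; the paper's is shorter but leaves the passage from the $L^2$ pairings $I_1,I_2,I_3$ to the $H^1_p$ energy somewhat implicit.
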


\begin{proof}
Let $s\ge 1$. Using the triangular inequality, we have
\begin{equation}
\label{eq:triangular}
\norm{u-u^N}_{X_1} \le \norm{u-P_N u}_{X_1} + \norm{P_N u - u^N}_{X_1}.
\end{equation}
Lemma~\ref{lm:sobolev} implies
\[
\norm{(u-P_N u)(\cdot,t)}_{H^1_p(D)} \le L N^{1-s}\norm{u(\cdot,t)}_{H_p^s(D)},
\]
and
\[
\norm{(u-P_N u)_t(\cdot,t)}_{H_p^1(D)} \le L N^{1-s}\norm{u_t(\cdot,t)}_{H_p^s(D)}.
\]
Therefore,
\begin{equation}
\label{eq:1term}
\norm{(u-P_N u)_t}_{X_1} \le L N^{1-s}\norm{u_t}_{X_s}.
\end{equation}
Subtracting~\eqref{eq:scheme} from~\eqref{perid1} and taking the inner product with $\left(P_N u - u^N\right)_t \in S_N$, we have
\begin{equation}
\label{eq:difference}
\begin{split}
0=&\underbrace{\int_{D} \rho\left(u_{tt}(x,t)-u_{tt}^N(x,t)\right)\left(P_N u(x,t) - u^N(x,t)\right)_t\,dx}_{{}=:I_1}\\
 &- \underbrace{\int_{D}\left(g(u(x,t))-P_N g(u^N(x,t))\right)\left(P_N u(x,t)- u^N(x,t)\right)_t\,dx}_{{}=:I_2}\\ 
&- \underbrace{\int_{D}\left(b(x,t)-P_N b(x,t)\right)\left(P_N u(x,t) - u^N(x,t)\right)_t\,dx}_{{}=:I_3}.
\end{split}
\end{equation}
The orthogonal condition~\eqref{eq:orthogonal} implies that
\[
\int_{D}\left(u_{tt}(x,t) - P_N u_{tt}(x,t)\right)\left(P_N u(x,t)-u^N(x,t)\right)_t\,dx = 0,
\]
and
\[
\int_{D}\left(b(x,t)-P_N b(x,t)\right)\left(P_N u(x,t) - u^N(x,t)\right)_t\,dx =0.
\]
Thus,
\begin{equation}
\label{eq:I1}
\begin{split}
I_1 &= \int_{D}\rho\left(u_{tt}(x,t) - P_N u_{tt}(x,t)\right)\left(P_N u(x,t)-u^N(x,t)\right)_t\,dx\\
& + \int_{D} \rho\left(P_N u_{tt}(x,t)-u^N_{tt}(x,t)\right)\left(P_N u(x,t)-u^N(x,t)\right)_t\,dx\\ 
&= \frac{\rho}{2}\frac{d}{dt}\norm{(P_N u - u^N)_t(\cdot,t)}^2_{H_p^1(D)},
\end{split}
\end{equation}
and $I_3 =0$.

Now we focus on $I_2$. Thanks to~\eqref{eq:orthogonal}, we have
\[
\int_{D}\left(g(u^N(x,t))-P_N g(u^N(x,t))\right)\left(P_N u(x,t)-u^N(x,t)\right)_t\,dx =0.
\]
Since $u(\cdot,t)$, $u^N(\cdot,t)\in H_p^1(D)$, there exists $L > 0$ such that
\[
\norm{(u-u^N)(\cdot,t)}^2_{H_p^1(D)} \le 2\left(\norm{u(\cdot,t)}^2_{H_p^1(D)} + \norm{u^N(\cdot,t)}^2_{H_p^1(D)}\right) \le L.
\]
As a consequence, since $C\in L^{\infty}(D)$ and using the Cauchy's inequality, we obtain
\begin{equation}
\label{eq:I2}
\begin{split}
I_2 &=\int_{D} \left(g(u(x,t))-g(u^N(x,t))\right)\left(P_N u(x,t)-u^N(x,t)\right)_t\,dx\\
&=\int_{D}\int_{D}C(\hat x - x)\left(u(\hat x,t)-u(x,t)-u^N(\hat x,t) + u^N(x,t)\right)\left(P_N u(x,t)-u^N(x,t)\right)_t\,d\hat x dx\\
&\le L \int_{D}\left(u(x,t)-u^N(x,t)\right)\left(P_N u(x,t)-u^N(x,t)\right)_t\,dx\\
& + \frac{1}{2}\norm{(u-u^N)(\cdot,t)}^2_{H^1_p(D)}\int_{D}\left(u(x,t)-u^N(x,t)\right)\left(P_N u(x,t)-u^N(x,t)\right)_t\,dx\\
&\le L \norm{(u-u^N)(\cdot,t)}^2_{H_p^1(D)} + L \norm{(P_N u - u^N)_t(\cdot,t)}^2_{H_p^1(D)}.
\end{split}
\end{equation}
Substituting~\eqref{eq:I1} and~\eqref{eq:I2} in~\eqref{eq:difference}, we have
\begin{equation}
\label{eq:substitution}
\frac{\rho}{2}\frac{d}{dt}\norm{(P_N u - u^N)_t(\cdot,t)}^2_{H_p^1(D)} \le L \norm{(u-u^N)(\cdot,t)}^2_{H_p^1(D)} + L \norm{(P_N u - u^N)_t(\cdot,t)}^2_{H_p^1(D)}.
\end{equation}
Adding to both sides of equation~\eqref{eq:substitution} the term
\[
\frac{1}{2}\frac{d}{dt}\norm{(P_N u - u^N)(\cdot,t)}^2_{H_p^1(D)} =\int_{D}\left(P_N u (x,t) - u^N(x,t)\right)\left(P_N u (x,t) - u^N(x,t)\right)_t\,dx,
\]
we obtain
\begin{equation*}
\begin{split}
&\frac{d}{dt}\left(\norm{(P_N u - u^N)_t(\cdot,t)}^2_{H_p^1(D)} + \norm{(P_N u - u^N)(\cdot,t)}^2_{H_p^1(D)}\right)\\
&\le L\left(\norm{(P_N u - u^N)_t(\cdot,t)}^2_{H_p^1(D)} + \norm{(P_N u - u^N)(\cdot,t)}^2_{H_p^1(D)} + \norm{(u-P_N u)(\cdot,t)}^2_{H_p^1(D)}\right).
\end{split}
\end{equation*}
Since $\norm{(P_N u - u^N)_t(\cdot,0)}_{H_p^1(D)} = 0$ and $\norm{(P_N u - u^N)(\cdot,0)}_{H_p^1(D)} = 0$, Lemma~\ref{lm:sobolev} and Gronwall's inequality imply that
\begin{align*}
\biggl(\norm{(P_N u - u^N)_t(\cdot,t)}^2_{H_p^1(D)} &+ \norm{(P_N u - u^N)(\cdot,t)}^2_{H_p^1(D)}\biggr)\\
&\le\int_0^t e^{L(t-\tau)}\norm{(u-P_N u)(\cdot,\tau)}^2_{H_p^1(D)}\,d\tau\\
&\le L(T)N^{2-2s}\int_0^t\norm{u(\cdot,\tau)}^2_{H_p^1(D)}\,d\tau.
\end{align*}
Thus,
\begin{equation}
\label{eq:2term}
\norm{P_N u - u^N}^2_{X_1}\le L(T)N^{1-s}\norm{u}_{X_s}.
\end{equation}
Finally, using~\eqref{eq:1term} and~\eqref{eq:2term} in~\eqref{eq:triangular}, we complete the proof.
\end{proof}

\section{Time discretization}
\label{sec:time}

In this section we consider the full discretization (time discretization) of the semidiscretized system (\ref{perid6}) obtained by applying a quadrature formula to the original problem.
Let us consider the time step size $\tau>0$ and the partition of the time interval $[0,T]$ by means of $t_{n}=n\tau$, for $n=0,\ldots, N_T$,  where $N_T=\left\lfloor \frac{T}{\tau}\right\rfloor$.  Let us  denote $U_n\approx U(t_n)$ and $V_n\approx U'(t_n)$. In what follows, we consider standard time discretization schemes, such as the  St\"ormer-Verlet scheme and the implicit midpoint method, together with less standard procedures based on  a  {\bf   trigonometric} approach.

\subsection{St\"ormer-Verlet scheme}
This is a symplectic, second order in time,  explicit scheme~\cite{verlet}:

\begin{equation}\label{Stormer-verlet0}
 \left\{
\begin{array}{rl}
 V_ {n+\frac{1}{2}}=&V_n+\frac{\tau}{2}[-\Omega^2U_n+ B(t_n)], \\\\
  U_ {n+1}= & U_n+\tau V_ {n+\frac{1}{2}},\\\\
 V_ {n+1}=& V_ {n+\frac{1}{2}}+\frac{\tau}{2}[-\Omega^2 U_{n+1} + B(t_ {n+1})]. 
\end{array}
\right.
\end{equation}
The error, for the time discretization of the St\"ormer-Verlet scheme is well known to be $O(\tau^2)$, while the error in the spatial discretization by the composite  midpoint quadrature is $O(h^2)$; therefore, the overall error of the procedure  (\ref{Stormer-verlet0}) is $O(\tau^2)+O(h^2)$ under sufficient smoothness assumptions on $C$ and $u$. In the case of discontinuities or unboundness of the spatial derivatives of $C$ and/or $u$, the overall error reduces to  $O(\tau^2)+O(h)$.


\subsubsection{von Neumann linear stability of the St\"ormer-Verlet scheme}

Let us consider the  von Neumann analysis to study the stability of the St\"ormer-Verlet scheme  (see  \cite{Morton_Mayers_1998, Lapidus_2003}).
Let us consider the two-step formulation of the scheme applied to the case in which $b(x,t)=0$, that is:
$$  U_ {n+1}-2  U_ {n}+  U_ {n-1}=\tau^2[-\Omega^2U_n].$$
Suppose  to use the midpoint composite formula to approximate the integral in (\ref{perid1}). 
  Let $U_{n,i}$ be the $i$-th component of $U_n$ and reorder the spatial index so that $i$ and $j$  vary  between $-N/2$ and $N/2$ instead of from $0$  to $N$. Then the $i$-th component of the previous equation satisfies:
\begin{equation}\label{Stormer-VerletI}
\rho\ \frac   { U_{n+1,i}-2  U_{n,i}   +U_{n-1,i}  } {\tau^2}= h \sum_{j=-N/2}^{N/2}C_{ij}( U_{n,j}-U_{n,i}  ).
\end{equation}
Let us assume  $U_{n,i}=\mu^n \exp(\phi i\Im) $, $\Im $ the imaginary unit,  $\mu$ is a complex number while  $\phi$ is a positive real number.
We need to determine the conditions on  $\tau$ and $h$ under which $|\mu|\le 1$ (see also   \cite{Silling_Askari_2005}).
Thus, by replacing $U_{n,i}=\mu^n \exp(\phi i\Im ) $ into the numerical scheme   (\ref{Stormer-VerletI}) we obtain:

\begin{equation}\label{Stormer-verletII}
\rho\ \frac   { \mu^{n+1}  -  2\mu^{n}  +\mu^{n-1} 
 } {\tau^2} \exp(\phi i\Im )= h \sum_{j=-N/2}^{N/2}C_{ij} \mu^{n}   [\exp(\phi j\Im)-\exp(\phi i\Im )  ],
\end{equation}
hence,
\begin{equation}\label{Stormer-verletIII}
\rho\ \frac   { \mu -  2  +\mu^{-1} 
 } {\tau^2} = h \sum_{j=-N/2}^{N/2}C_{ij}  [\exp(\phi (j-i)\Im )-1].
\end{equation}
Setting  $q=j-i$, ${\cal C}_q= C_{ij}$ and using the fact that  ${\cal C}_q$ is an even function   (i.e.  ${\cal C}_q={\cal C}_{-q} $)  we have


\begin{equation}\label{Stormer-verletIIII}
\rho\ \frac   { \mu -  2  +\mu^{-1} 
 } {\tau^2} = h \sum_{q=-N'/2}^{N'/2}  {\cal C}_q   [\exp(\phi q \Im )-1]   =2 h \sum_{q=0}^{N'/2}  {\cal C}_q   [\cos(\phi q)-1],
\end{equation}
where $N'$ depends on $i$.


Setting  $\displaystyle \Lambda=  \sum_{q=0}^{N'/2}  {\cal C}_q   [1- \cos(\phi q)]$, then

\begin{equation}\label{verletIII}
\rho\ \frac   { \mu -  2  +\mu^{-1} 
 } {\tau^2}   +2h \Lambda =0 \iff \mu^2 -  2\left (1-  \frac{h \tau^2} {\rho}   \Lambda \right )  \mu  + 1 =0,
\end{equation}
whose roots are


$$ \mu_ {1/2} =(1-  \frac{h \tau^2} {\rho}   \Lambda  )\pm \sqrt  {    \frac{h \tau^2} {\rho} \Lambda \left ( \frac{h \tau^2} {\rho} \Lambda-2 \right)  }.$$
Therefore, the condition such that $|\mu|\le 1$ is given by

$$ \frac{h \tau^2} {\rho} \Lambda-2<0 \iff  \tau<  \sqrt  {  \frac{2 \rho } {h\Lambda} },$$
and since  $\displaystyle \Lambda \le 2  \sum_{q=0}^{N'/2}  {\cal C}_q, $ then
\begin{equation}\label{stab}
\displaystyle \tau<  \sqrt  {  \frac{ \rho } {h\sum_{q=0}^{N'/2}  {\cal C}_q  } } 
\end{equation}
is the condition on $\tau$ and $h$ that should be satisfied in order to have the numerical stability of the scheme.

\subsection{Implicit Midpoint scheme}
This is a symplectic implicit second order scheme: 
\begin{equation}\label{Stormer-verlet}
 \left\{
\begin{array}{rl}
  U_ {n+1}= & U_n+ \frac{\tau}{2} (V_ {n+1}+V_n),   \\\\
 V_ {n+1}=& V_n +\frac{\tau}{2}[-\Omega^2 (U_{n+1}+U_n) + (B(t_n)+B(t_ {n+1}))]. 
\end{array}
\right.
\end{equation}
Such a scheme, being implicit, will allow us to consider larger time step values with respect to the ones used in the explicit formulas. In particular it is linearly unconditionally stable.

\subsection{Trigonometric schemes}

Thanks to the variation-of-constants formula,  the solution in (\ref{perid16}) is 
\begin{equation}\label{Trigonometric_method}
 \left\{
\begin{array}{rl}
 U(t)=&  \cos(t\Omega)U_0+     t\  {\rm sinc}   (t  \Omega) V_ 0+\displaystyle \int_0^t     (t-s)   {\rm sinc}     ( (t-s)  \Omega)  B(s)ds,\\
 V(t)= & -     \Omega  \sin   (t  \Omega) U_ 0  + \cos(t\Omega)V_0  +\displaystyle \int_0^t   \cos   ( (t-s)  \Omega)  B(s)ds,
\end{array}
\right.
\end{equation}
where $\Omega$ is the unique positive definite square root of $\Omega^2$ and sinc$(x)=  \frac{\sin x}{x}$.

A discretization of the variation-of-constants formula~\eqref{Trigonometric_method}  provides the  following explicit  numerical procedure
\begin{equation}\label{Trigm}
 \left\{
\begin{array}{rl}
 U_ {n+1}=&   \cos(\tau\Omega) U_n+ \tau \  {\rm sinc}  (\tau  \Omega) V_ {n}+\displaystyle \int_0^\tau        (\tau-s) \  {\rm sinc} ( (\tau-s)  \Omega)  B(t_n+s)ds,\\\\
V_ {n+1}  = & - \Omega  \sin   (\tau \Omega) U_ n  + \cos(\tau  \Omega)V_n +\displaystyle \int_0^\tau  \cos   ( (\tau-s)  \Omega)  B(t_n+s)ds,
\end{array}
\right.
\end{equation}
enriched  by the initial conditions $U_0$ and $V_0$  [${\rm sinc } (x)  =\frac {\sin  x}{ x}$].  Since we are supposing that $\Omega^2$ is symmetric and definite positive (see Remark \ref{rem1}), then $\Omega$ is the unique positive definite square root of $\Omega^2$. 

When $B$ is constant (i.e. $b(x,t)$ is independent on $t$), this method provides the exact solution at time $t_ {n+1}$; while, in the case of $B$ depending on $t$,  we need to use a quadrature formula to  evaluate the integrals  in  (\ref{Trigm}); in particular we will use a formula with the same accuracy of the one used in the space discretization.

For instance, using the midpoint quadrature  formula 
we derive the following trigonometric scheme of the second order in space and time:
\begin{equation}\label{Trig_meth3}
 \left\{
\begin{array}{rl}
 U_ {n+1}=&   \cos(\tau \Omega) U_n+\tau {\rm sinc }  (\tau  \Omega) V_ {n} +  \displaystyle    \frac{\tau^2}{2}   \   {\rm sinc }  \left(\frac{\tau}{2} \Omega\right)    B\left(t_{ n+\frac{1}{2}}\right), \\ \\
V_ {n+1}  = & -     \Omega  \sin   (\tau \Omega) U_ n  + \cos(\tau  \Omega)V_n + \displaystyle \tau   \cos   \left( \frac{\tau}{2 }  \Omega\right)   B\left(t_{ n+\frac{1}{2}}\right).
\end{array}
\right.
\end{equation}

Instead, using the two-point Gauss quadrature we derive a scheme of the forth order in space and time:
\begin{equation}\label{Trig_meth8}
\resizebox{.99\textwidth}{!}{$
\begin{cases}
 U_ {n+1}=\cos(\tau \Omega) U_n+\tau {\rm sinc }  (\tau \Omega) V_ {n} +
 \displaystyle    \frac{\tau^2}{4}   \left[  \alpha\   {\rm sinc }  \left(      \frac{\tau}{2} \alpha\   \Omega\right)  B\left(t_n   +   \frac{\tau}{2}\beta  \right) +     \beta {\rm sinc }  \left(      \frac{\tau}{2} \beta\   \Omega\right)  B\left(t_n   +   \frac{\tau}{2}\alpha \right)     \right],\\ \\
V_ {n+1}=  - \Omega  \sin   (\tau \Omega) U_ n  + \cos(\tau  \Omega)V_n + 
\displaystyle   \frac{\tau}{2 }   \left[  \cos \left( \frac{\tau}{2 } \alpha    \Omega\right)  B\left(t_n+ \frac{\tau}{2 }\beta  \right)   +   \cos   \left( \frac{\tau}{2 } \beta   \Omega\right)  B\left(t_n+ \frac{\tau}{2 }\beta   \right)  \right],
\end{cases}
$}
\end{equation}
where $\alpha=(1+ \frac  {1} {\sqrt 3}) $ and $\beta=(1- \frac  {1} {\sqrt 3}) $. 
\medskip

Of course the matrices $\Omega$  in (\ref{Trig_meth3}) and (\ref{Trig_meth8}) are different and come respectively from the discretization of the spatial integral  by the midpoint and the two-points Gauss formula.

These schemes require the evaluation of the matrix functions $ \cos(\tau \Omega)$  and ${\rm sinc }  (\tau  \Omega)$, and
while it is possible to compute $\cos(\tau \Omega)$ by using a MATLAB routine,    this  is not possible for ${\rm sinc }  (\tau  \Omega)$. A way to overcome this difficulty is to employ the series expression for  ${\rm sinc }  (\tau  \Omega)$ but this often  results to be expensive and,  more seriously, it can be very inaccurate~\cite{MR2015575}.  If the diagonalization of $\Omega$ is not too expensive then it is better  to first diagonalize $\Omega$  in order to work with  $ \cos(\tau \cdot)$  and ${\rm sinc }  (\tau  \cdot)$ of scalar entries. 

When  $\Omega$ is of  large dimension, the computation of products of functions
 of matrices  (i.e. $ \cos(\tau \Omega)$  and ${\rm sinc }  (\tau  \Omega)$) by vectors could be efficiently done  by means of  Krylov subspace methods  (see for instance \cite{Lopez_Simoncini_2006,  Lopez2006}).   For a review of the computation of the functions $\cos$ and sinc for matrix arguments, the interested reader may refer to~\cite{Hochbruck2008}.

In order to avoid the cost for the inverse of $\Omega$, required in the computation of sinc$(\tau\Omega)$, we can multiply the first row of  (\ref{Trig_meth3}) by $\Omega$ 
\begin{equation}\label{Trig_meth33}
 \left\{
\begin{array}{rl}
 \Omega U_ {n+1}=&   \Omega \cos(\tau \Omega) U_n+ \sin  (\tau  \Omega) V_ {n} +  \displaystyle  \tau  \sin\  (\frac{\tau}{2} \Omega)    B(t_{ n+\frac{1}{2}}    ),   \\ \\
V_ {n+1}  = & -     \Omega  \sin   (\tau \Omega) U_ n  + \cos(\tau  \Omega)V_n + \displaystyle \tau   \cos   ( \frac{\tau}{2 }  \Omega)   B(t_{ n+\frac{1}{2}}    ), 
\end{array}
\right.
\end{equation}
and then solve at each time step a linear system of algebraic equations with the same coefficient  matrix $\Omega$. Similarly, we may reduce the number of flops of  (\ref{Trig_meth8}).


 However, in this case a deep study of the conditioning of $\Omega$ should be done.

\subsubsection{Spectral  linear stability}

Let us consider  the  scalar version of the  problem (\ref{perid6}) with $B(t)=0$, that is
\begin{equation}\label{perid62}
  \begin{pmatrix} u'  \\ v' \end{pmatrix} =     \begin{pmatrix} 0 &1\\ -\omega^2& 0  \end{pmatrix}      \begin{pmatrix}  u \\ v\end{pmatrix},
\end{equation}
where $v=u'$,   the initial conditions are $u_0$ and $v_0$  and $\omega^2$ is the modulus of the largest eigenvalue of $\Omega^2$ .  

If we apply  the St\"ormer-Verlet method  to such  a scalar problem  we derive
   


\begin{equation}\label{peri61}
  \begin{pmatrix}    u_ {n+1} \\  v_ {n+1} \end{pmatrix} =   M(\tau \omega) \begin{pmatrix}    u_ {n}\\    v_ {n}  \end{pmatrix},
\end{equation}
where
$$ M(\tau\omega)=\begin{pmatrix} (1-  \frac{\tau^2}{2} \omega^2)  &\tau\\\\     
      \frac{\tau}{2} (-\omega^2)  (2-  \frac{\tau^2}{2} \omega^2)  &  (1-  \frac{\tau^2}{2} \omega^2)\end{pmatrix}.   $$
The characteristic polynomial of $M(\tau\omega)$ is given by  $\lambda^2- (2-\tau^2 \omega^2)\lambda+1$, thus the eigenvalues of $M(\tau\omega)$ are in modulus equal to 1 if and only if $0<\tau\omega \le 2$, that is  
$$\tau < 2 \sqrt  {  \frac{ \rho } {h k } }, $$
being $\omega^2=hk/\rho$,  where $k$ is the largest eigenvalue of the stiffness matrix $K$. Hence the method results to be conditionally stable and this stability condition should be compared with  (\ref{stab})  obtained by the von Neumann approach.

As far as the linear stability of the implicit midpoint scheme is concerned we have (\ref {peri61}) with 


$$ M(\tau\omega)=  \frac{1} {1+\frac{\tau^2} {4} \omega^2 }    \begin{pmatrix} (1-  \frac{\tau^2}{4} \omega^2)  &\tau\\\\     
     -\tau \omega^2   &  (1-  \frac{\tau^2}{4} \omega^2)\end{pmatrix},$$
\medskip\noindent
whose characteristic polynomial is given by  
$$ p(\lambda)= \frac{1} {1+\frac{\tau^2} {4} \omega^2 }\   [  \lambda^2-   2 (1-  \frac{\tau^2}{4} \omega^2)\lambda + (1-  \frac{\tau^2}{4} \omega^2)^2 +\tau^2\omega^2 ]. $$
Thus, the eigenvalues of $M(\tau\omega)$ are in modulus equal to 1 for each value of $\tau \omega$.  Hence the method results to be unconditionally stable.

If   the trigonometric  method is applied to the  linear scalar problem  we derive (\ref{peri61}) with
   

$$ M(\tau\omega)=\begin{pmatrix} \cos(\tau \omega)      &\tau {\rm sinc }  (\tau  \omega)\\   
     - \omega  \sin   (\tau \omega) &   \cos(\tau \omega) \end{pmatrix},   $$
whose characteristic polynomial is given by  $\lambda^2- 2\cos (\tau\omega) \lambda+1$. Thus, the eigenvalues of $M(\tau\omega)$ are in modulus  equal to 1 for each value of $\tau \omega$, this means that no restriction on $\tau \omega$ will be imposed and the method results to be unconditionally stable. This is also  justified  from the fact that  in this case   the trigonometric method provides the exact solution then no condition on the time step will follow and the only restriction on $\tau$ and  $h$   will be given by accuracy reasons.

\begin {remark}  In the case of autonomous problems  (i.e. B(t)=constant),  the total semidiscretized energy  in (\ref{ene1})  is a quadratic invariant of the  second order differential system  (\ref{perid2}).  The total  discretized energy at $t=t_n$ is given by
\begin{equation}\label{energy}
\mathcal{E}_n=\frac{1}{2} V_n^TV_n+\frac{1}{2} U_n^T\Omega^2 U_n - U_n^TB, \qquad \text{for every $n\ge 0$},
\end{equation}
 and it is well known that  symplectic methods, as  the implicit midpoint method and the St\"ormer-Verlet method,  preserve $\mathcal{E}_n$, that is
$\mathcal{E}_n=\mathcal{E}_0$ (see \cite{hairer2002geometric}). Moreover, even if, the trigonometric methods derived in this paper are not symplectic,  our numerical tests provide a very good  energy preservation, as the numerical tests will show. 
\end{remark}

\section{The nonlinear model of the peridynamics}
\label{sec:nonlin}

In this section we consider the  one-dimensional nonlinear model  (\ref{perid21})  for an homogeneous bar of infinite length
and propose a numerical approach which allows us to use the numerical methods studied for the linear case. Set  $\xi = 
\hat x- x $, and  $ \eta= 
u(\hat x;t)-u(x;t)$. The 
pairwise 
force 
function $f(\xi,\eta)$ 
 may be   considered 
0 outside the interval
horizon $(-\delta,\delta)$.

The general form 
of  a 
pairwise 
force 
function, 
describing  {\bf isotropic}  materials,  is  given by
\begin{equation}\label{nonl3}
f(\xi,\eta)=\phi(|\xi|,|\eta|)\eta.
\end{equation}
An example of such a function   leads to 
the 
so-called    {\bf  bondstretch} model 
\begin{equation}\label{nonl5}
f(\xi,\eta)= c\ s (|\xi|,|\eta|) \ \frac {\eta}{|\eta|},
\end{equation}
where  $c $ is a 
constant (depending 
on 
the 
material 
parameters, 
the 
dimension 
and 
the horizon),  
while 
$$ s (|\xi|,|\eta|)=\frac {|\eta |-|\xi|}{|\xi|},$$
describes 
the  relative change of the Euclidean 
distance of the particles. Notice
that  here the function $f$ is discontinuous in 
its 
first  
argument, and this will reduce the theoretical order of the numerical scheme used. 

 Other
examples 
are 
\begin{equation}\label{nonl6}
f(\xi,\eta)= c\  (|\eta|-|\xi|)^2\ \eta,
\end{equation}
with 
another 
constant 
$c$ (depending 
on 
the 
material 
parameters, 
the 
dimension 
and 
the 
horizon)
and 
\begin{equation}\label{nonl7}
f(\xi,\eta)= a(|\xi|) \  (|\eta|^2-|\xi|^2) \ \eta,
\end{equation}
for 
a 
continuous 
function 
$a$   (depending 
on 
material 
parameters, 
the 
dimension 
and 
the 
horizon) (see for instance  \cite{Emmrich_Puhst_2013, Silling_2000}).

\medskip 
Now, in order to apply the results of the previous section, we linearize the model. Let us assune that $|\eta|<<1$ and that 
 $f(\xi,\eta)$  is  sufficiently smooth. In particular we linearize the function $f(\xi,\cdot)$ with respect to the second variable 
\begin{equation}\label{nonl8}
f(\xi,\eta)\approx f(\xi,0)+C(\xi)\eta 
\end{equation}
where $C(\xi)$ is given by
$$ C(\xi)= \frac  { \partial f(\xi,0)}{\partial \eta}$$
and the term $O(\eta^2)$ has been omitted. Thus, if in  (\ref{neq1})  we replace $f(\xi,\eta)$  with its linear approximation, 
we derive a model of the form (\ref{perid1}). [Usually  $f(\xi,0)=0$, otherwise it can be incorporated into $b$]. In this way the results shown for the linear model hold for the linearized model too, even if, this linearization  will reduce the accuracy of  the theoretical and numerical solution.


 A more accurate method may be derived using the integral form
$$f(\xi,\eta)=f(\xi,0)+\int_0^ {\eta}  \frac  { \partial f(\xi,s)}{\partial \eta}(\eta-s)ds,$$
and then applying an accurate quadrature formula
$$f(\xi,\eta)\approx f(\xi,0)+  \sum_{r=1} ^m w_r   \frac  { \partial f(\xi,s_r)}{\partial \eta}(\eta-s_r),$$
where $w_r$ are the weights while $s_r$ are the nodes of this formula. In general this approach leads to implicit methods,  in fact, if  we use the trapezoidal formula
\begin{equation}\label{trap}
f(\xi,\eta)\approx f(\xi,0)+  \frac  { \eta}{2}  \left[  \frac  { \partial f(\xi,0)}{\partial \eta} +  \frac  { \partial f(\xi,\eta)}{\partial \eta}  \right],  
\end{equation}
we derive a second  order  implicit method.
If $f(\xi,\eta)$ is sufficiently smooth,  an alternative is using a Taylor expansion 
\begin{equation}\label{nonl9}
f(\xi,\eta)\approx f(\xi,0)+C_1(\xi)\eta +\ldots + C_s(\xi)\eta^s,
\end{equation}
where
$$C_i(\xi)= \frac  { \partial^i f(\xi,0)}{\partial \eta^i } \ ,\qquad i=1,\ldots, s,$$
providing an explicit scheme  where higher derivatives  of $f$ with respect to $\eta$ are required.


\section{Numerical tests and simulations}
\label{sec:numerical}
In this section we will provide some numerical simulation to confirm our results. All our codes have been written in {\tt MATLAB} using an Intel(R) Core(TM) i7-5500U CPU @ 2.40GHz computer.

We start with the linear model~\eqref{perid1} with $b(x,t)=0$  where the micromodulus function is given by~\eqref{microm2}. Assume the following initial condition: $ u_{0}(x)=e^{-(x/L)^{2}}$ $x\in\R$ and 
$v=0$, and consider, for simplicity, the parameters $ \rho$, $E$, $l$ and $L$ equal to 1.

The choice of this function is justified by the fact that the decay at infinity makes possible to consider a bounded domain of integration and this approximation improves as $l\rightarrow 0$. 


The theoretical solution for~\eqref{perid1} is~\cite{WECKNER2005705}
\begin{equation}\label{equexact}
u^*(x,t) = \frac{2}{\sqrt{\pi}}\int_0^{\infty}\exp{(-s^2)} \cos\left(2sx\right) \cos\left(2t\sqrt{1-\exp{(-s^2)}}\right)\,ds.
\end{equation} 

We denote by  $  {\bf u}^{\ast}(t)=(u^{\ast}(x_{0},t),...,u^{\ast}(x_{N},t))^{T} $ the theoretical solution vector at the time~$t$ and at the points of the spatial  discretized domain.

Unless otherwise specified, in what follows, we employ the {\tt Mathematica} library to compute the reference solution~\eqref{equexact}. 


\begin{figure}
\centering
\includegraphics[width=0.8\textwidth]{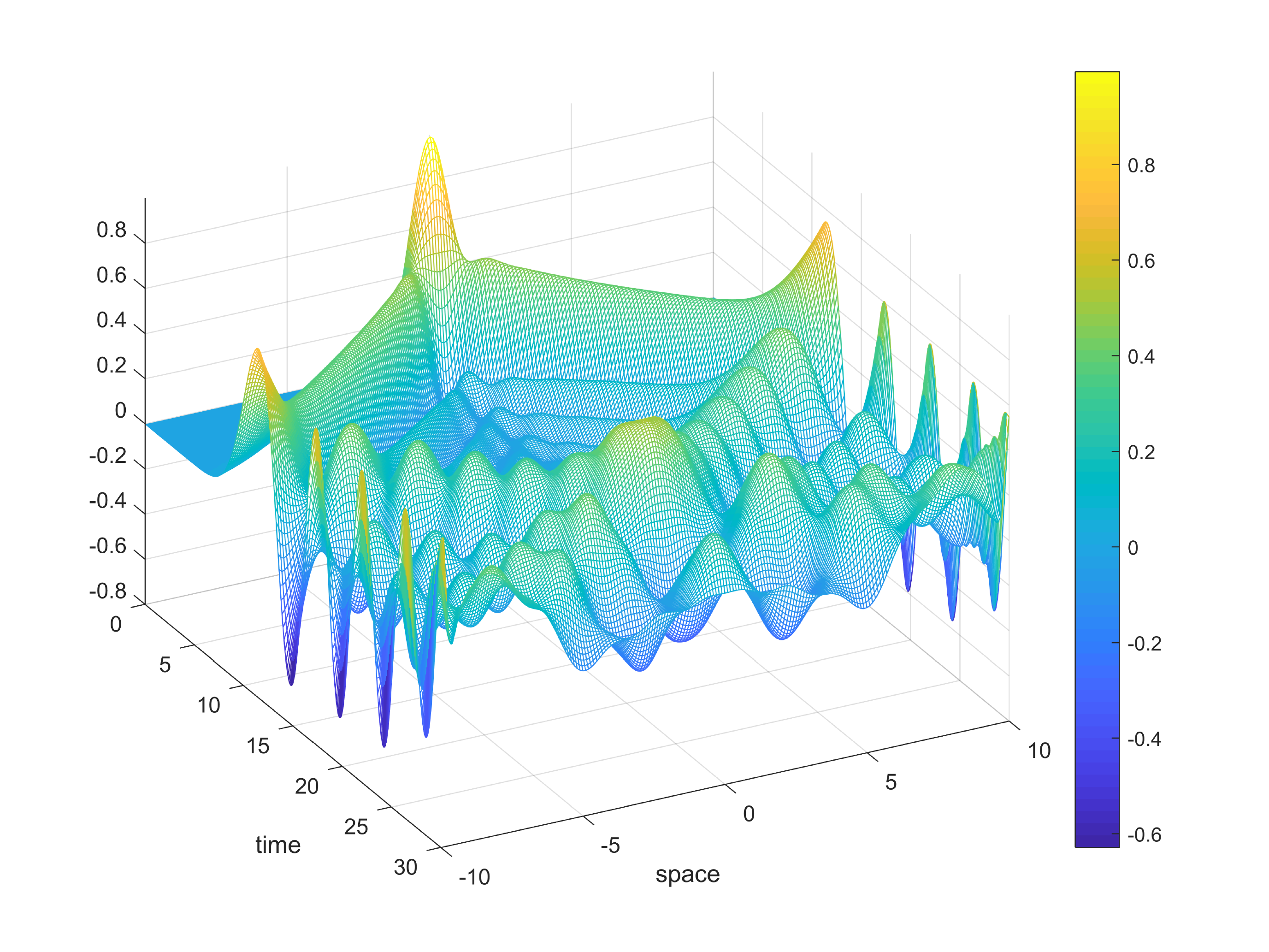} 
\caption{With reference to Test 1: the numerical solution obtained by the MSV method. The parameters for the simulations are $h=\tau=0.1$, $N=200$, $N_T=300$, $\rho=E=l=L=1$.}
\label{fig:sol3d}
\end{figure}
 
To show the errors and the orders of accuracy, we define $ {\bf e}_k $ as
\[  {\bf e}_{k}= \Vert   {\bf u}(t_{k}) -  {\bf  u}^{\ast} (t_{k}) \Vert_{\infty}:=\max \Bigl \{ |u(x_{i},t_{k})-u^{\ast}(x_{i},t_{k})| :  i=0,\ldots, N,\Bigr \}, \]
then, for each method,  we take the maximum error in the time interval $[0,T]$, namely  
$$ || {\bf e} ||_{\infty}:=\max \left \{    {\bf e}_{k} : k=1,\ldots, N_T \right \}. $$

We denote by MT, MSV, MMI and GT the methods consisting of the Midpoint+Trigonometric method, the Midpoint+St\"{o}rmer-Verlet method, the Midpoint+Implicit Midpoint method and the Gauss two points+Trigonometric method, respectively.

\subsection{Test 1: Comparison between MT, MSV, MMI and GT methods}

In this section we study the performance of the MT, MSV, MMI and GT methods by varying the time and space steps. In particular, we compute the error between the exact and the numerical solution and we study the rate of convergence. 

Figure~\ref{fig:sol3d} shows the numerical solution computed by MSV method, while Table~\ref{tab:error-methods} summarizes the errors of the different methods by varying the spatial and time discretization steps.
 In particular, in the MT method we have replaced the matrix $\Omega^{2}$ with the positive definite matrix $\Omega^{2}+h^{\gamma}I$, with $\gamma=2.4$.  
Moreover, for such test, we have assumed that the spatial and time step were equal:  $h=\tau$. Finally, $R_n$ denotes the ratio between the errors corresponding to $h$ and $h/2$, therefore, $\log_2{(R_n)}$ represents the order of convergence of the method.

\begin{table}%
\centering%
\renewcommand\arraystretch{1.3}
\begin{tabular}{|c|c|c|c|c|c|}
\hline
\hline
\rule[-4mm]{0mm}{1cm}
 \textbf{Methods} &  $h=\tau$   & $N$  & $N_T$&\hspace{0.2cm}  $||  {\bf e} ||_{\infty}$ \hspace{0.2cm}&  $\log_2{ (R_n)} $  \\
\hline
\hline

 & $0.100$ & $200$& $30$ &$1.2911\times 10^{-3}$ & - \\

 MSV & $0.050$  &$400$ & $60$ &$3.2340\times10^{-4}$ & $1.9971$ \\

 & $0.025$ &$800$ & $120$ &$8.0821\times10^{-5}$ & $2.0004$\\
\hline
 & $0.100 $ &  $200$& $30$ &$5.9276\times10^{-3}$ & - \\
 
MT & $0.050$  &$400$ & $60$ &$1.1126\times10^{-3}$ & $2.3959$\\
 
 & $0.025$  &$800$ & $120$ &$2.1350\times10^{-4}$& $2.3992$\\
\hline
& $0.100 $ &$200$ & $30$ &$2.5754\times10^{-3}$& -  \\
 
 MMI & $0.050$  & $400$& $60$ &$6.4621\times10^{-4}$ & $1.9946$\\
 
 & $0.025$ & $800$& $120$ &$1.6106\times10^{-4}$& $2.0043$\\

\hline
& $0.100 $ &$400$ &$30$  &$1.4940\times10^{-4}$ & -  \\
 
GT & $0.050$  &$800$ &$60$  &$9.3380\times10^{-6}$ & $3.9998$\\
 
 & $0.025$ &$1600$ &$120$  &$5.8300\times10^{-7}$& $4.0015$\\
\hline
\hline
\end{tabular}
\renewcommand\arraystretch{1}
\caption{With reference to Test 1: the comparison among MSV, MT, MMI and GT methods by varying $h$, $\tau$, $N$ and $N_T$. The parameters for the simulation are $\rho = E= l = L = 1$.}
\label{tab:error-methods}
\end{table}
Looking at $\log_2{(R_n)}$, in the last column of Table~\ref{tab:error-methods}, we see that the methods MSV, MT, MMI are of the second order of accuracy while GT is of the fourth order, but GT is more expensive because it uses a double number of nodes compared with MT and the evaluation of functions of matrices. 
 The method MSV is computationally less expensive than the others, but it has a bounded stability region, see Table~\ref{tab:stability-region}  where we have placed the Young's modulus  $E=100$.

\begin{table}%
\centering%
\renewcommand\arraystretch{1.3}
\begin{tabular}{|c|c|c|c|c|c|}
\hline
\hline
\rule[-4mm]{0mm}{1cm}
\textbf{Methods}  & $h$ &  $\tau$  & $N$ & $N_T$ &  \hspace{0.2cm}  $||  {\bf e}  ||_{\infty}$ \hspace{0.2cm} \\

\hline
\hline
 &$0.100$&  $0.100$ &$200$ &$300$ & $1.0543$  \\

MSV &$0.050$ & $0.200$ & $400$ & $150$ &  $2.6300\times10^{168}$\\

& $0.025$ & $0.400$ & $800$ & $75$&  $4.3600\times10^{131}$ \\

\hline

 &$0.100$&  $0.100$ &$200$ &$300$ & $1.0941$ \\ 

MT&$0.050$ & $0.200$ & $400$ & $150$ &  $1.1081$ \\

& $0.025$ & $0.400$ & $800$ & $75$ & $1.2987$  \\
\hline

 &$0.100$&  $0.100$ &$200$ &$300$ & $1.0923$  \\
MMI&$0.050$ & $0.200$ & $400$ & $150$ &   $1.0925$ \\
& $0.025$ & $0.400$ & $800$ & $75$ &  $8.2060\times10^{-1}$ \\
\hline
\hline
\end{tabular}
\renewcommand\arraystretch{1}
\caption{With reference to Test 1: the maximum error for the methods MSV, MT and MMI for different choices of $h$, $\tau$, $N$ and $N_T$. The parameters for the simulation are $\rho = l=L=1$, $E=100$.}
\label{tab:stability-region}
\end{table}

\subsection{Test 2: The conservation of the total semidiscretized energy in the autonomous case}
As far as the conservation of the energy of the semidiscretized problem is concerned,   we should have that  $\mathcal{E}_n-\mathcal{E}_0=0$, see~\eqref{energy}, and  
in Figure~\ref{fig:energy} we show the comparison between the energy conservation obtained by the MSV and MT methods in the time interval $[0,30]$ and for a number of spatial nodes equal to 200. We observe that the  maximum variation of the numerical energy is of order $10^{-2}$.
\begin{figure}
\centering            
\includegraphics[width=0.6\textwidth]{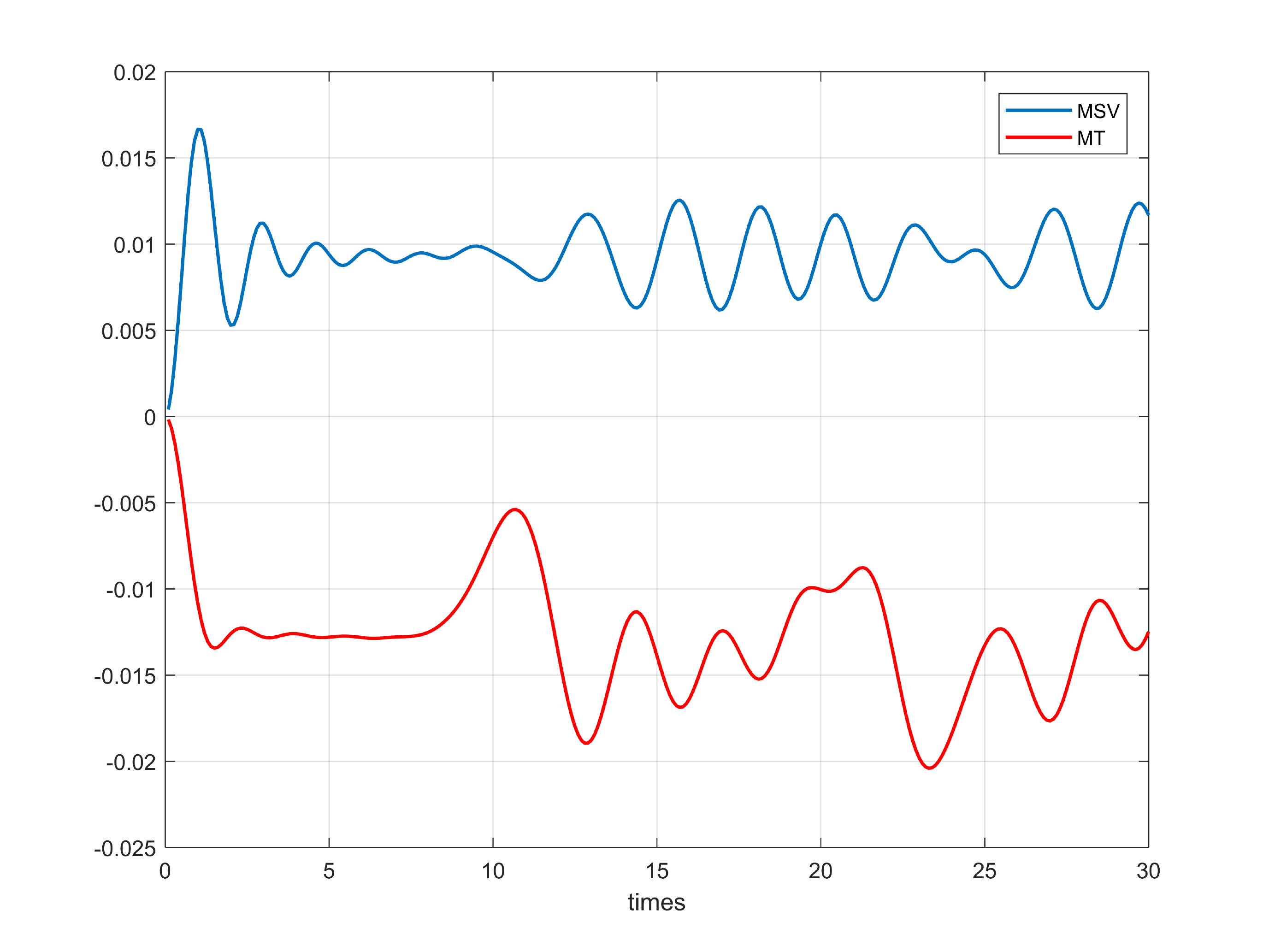} 
\caption{With reference to Test 2: the energy variation $\mathcal{E}_n-\mathcal{E}_0$ associated with MSV and MT methods for $N= 200$.}
\label{fig:energy}
\end{figure}
If  we double the number of spatial nodes to 400, the maximum variation of the energy is of order $10^{-3}$ showing that $\mathcal{E}_n$ depends also on the error of the quadrature formula used to discretize the spatial domain.  




\subsection{Test 3: A comparison between the numerical solution of the linear peridynamic equation with the solution of the wave equation}

We now compare the numerical solution  of the linear peridynamic equation with the solution  of the wave equation in \eqref{wave1}. We define the difference vector
\[   {\bf  d}_{k}= \Vert    {\bf  u} ^{\ast}  (t_{k}) -  {\bf u}^{\ast \ast} (t_{k}) \Vert_{\infty},\quad\text{for }k=1,...,n,\]
where $ {\bf u} ^{\ast }   (t)=(u(x_{0},t),...,u(x_{N},t))^{T}$ is  the numerical solution at the spatial points of the peridynamic equation, while $ {\bf u}^{\ast \ast}   (t)=(u(x_{0},t),...,u(x_{N},t))^{T}$  is the numerical solution at the spatial points of the wave equation.


In Table~\ref{tab:wave-comparison}, we have reported the maximum difference between ${\bf u} ^{\ast }   (t)$ and ${\bf u}^{\ast \ast}   (t)$ as $l$ goes to zero.
\begin{table}
\centering%
\renewcommand\arraystretch{1.3}
\begin{tabular}{|c|c|c|}
\hline
\hline
\rule[-4mm]{0mm}{1cm}
\textbf{Methods} & $l/L$ & $ \hspace{0.4cm} ||    {\bf d}||_{\infty}  $ \hspace{0.4cm} \\
\hline
\hline
 & $0.400$  & $5.4948\times10^{-2}$ \\
MSV & $0.200$  & $1.2269\times10^{-2}$ \\
 & $0.100$ &  $2.4625\times10^{-3}$ \\
\hline
 & $0.400$  &  $5.2569\times10^{-2}$ \\
MT & $0.200$  & $1.5168\times10^{-2}$ \\
 & $0.100$ &  $6.0420\times10^{-3}$ \\
\hline
 & $0.400$  &  $5.6887\times10^{-2}$  \\
GT & $0.200$  & $1.4646\times10^{-2}$ \\
 & $0.100$ &  $3.7111\times10^{-3}$  \\
\hline
 & $0.400$  &  $6.0951\times10^{-2}$  \\
MMI & $0.200$  &    $1.9493\times10^{-2}$  \\
 & $0.100$ &     $9.6978\times10^{-3}$ \\
\hline
\hline
\end{tabular}
\renewcommand\arraystretch{1}
\caption{With reference to Test 3: the maximum distance between ${\bf u} ^{\ast }   (t)$ and ${\bf u}^{\ast \ast}   (t)$ as function of the ratio $l/L$ for different methods.}
\label{tab:wave-comparison}
\end{table}

\subsection{Test 4: Validation of spectral semi-discretization scheme}
In this section we implement and validate the scheme proposed in Section~\ref{sec:spectral}. We consider the linear model~\eqref{perid1} and we take the micromodulus function $C(x) = \frac{4} {\sqrt{\pi}}\, \exp{(-x^2)}$, as in~\eqref{microm2}, where for simplicity we take $E=l=1$. We assume that the body is not subject to external forces, namely $b(x,t) \equiv 0$ and the density of the body is $\rho(x) = 1$. As initial condition, we choose $u_0(x) = \exp{(-x^2})$ and $v(x) =0$.

We denote by $u^*(x,t)$ the reference solution for the problem given by~\eqref{equexact}. 
Since $u^*(x,t)$ decays exponentially to zero as $|x|\to\infty$, we can truncate the infinite interval to a finite one $[-M\pi, M\pi]$, with $M>0$, and we approximate the boundary conditions by the periodic boundary conditions on $[-M\pi, M\pi]$. It is expected that the initial-boundary valued problem can provide a good approximation to the original initial-valued problem as long as the solution does not reach the boundaries.

Notice that, in this simple case, we do not need to use a time discretization for solving~\eqref{eq:ode}. Indeed, we have
\[
\omega_k^2 = \frac{8}{\sqrt{\pi}}\int_0^\infty \exp{(-\xi^2)}\left(1-\cos(k\xi)\right)\,d\xi= 4\left(1-\exp{(-\frac{k^2}{4})}\right), 
\]
hence, the solution of the homogeneous Cauchy problem~\eqref{eq:ode} in the frequencies space
is 
\[
\tilde{u}_k(t) = \tilde{u}_{0,k}\cos\left(\omega_k\,t\right).
\]
We fix a constant space step $h = 10^{-3}$, $M=2.5$ and we set $N = 2\left\lfloor\frac{\pi}{h}\right\rfloor = 6284$. Figure~\ref{fig:comparison} shows the comparison between the exact solution and its numerical approximation at different times. 

\begin{figure}
\centering
\begin{subfigure}[b]{.32\textwidth}
\includegraphics[width=\textwidth]{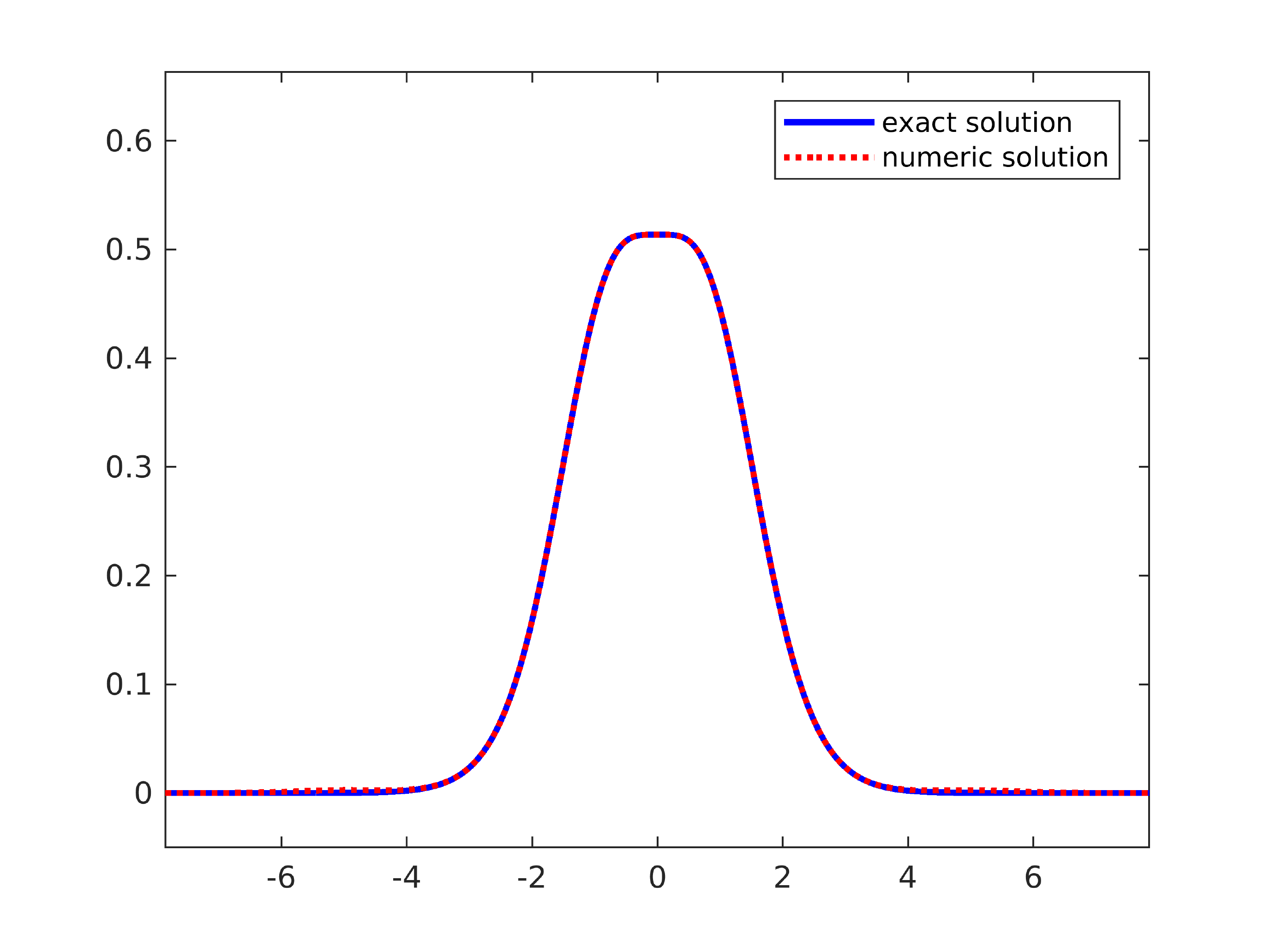}
\caption{$t=1$.}
\end{subfigure}
\,
\begin{subfigure}[b]{.32\textwidth}
\includegraphics[width=\textwidth]{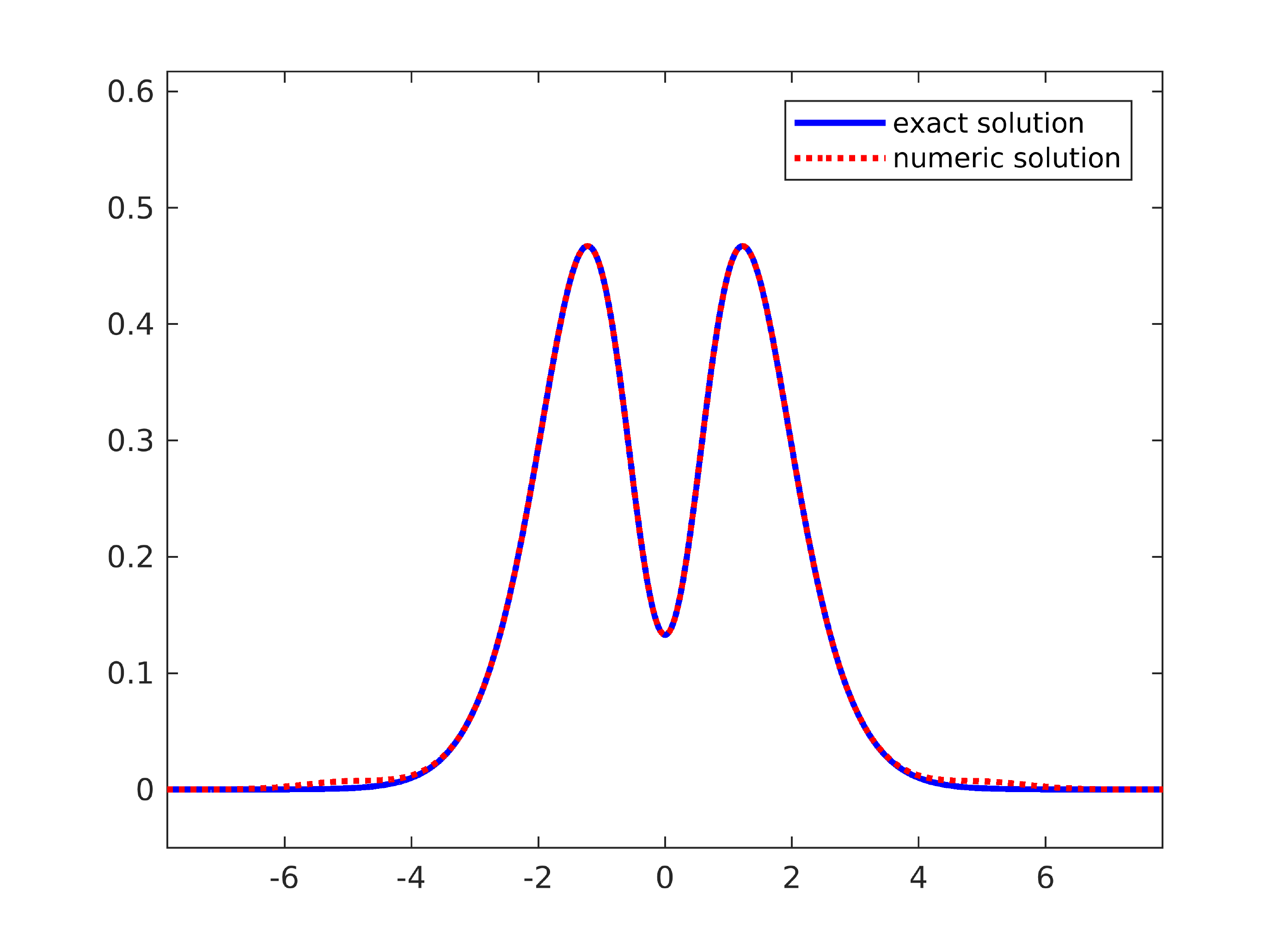}
\caption{$t=1.5$.}
\end{subfigure}
\,
\begin{subfigure}[b]{.32\textwidth}
\includegraphics[width=\textwidth]{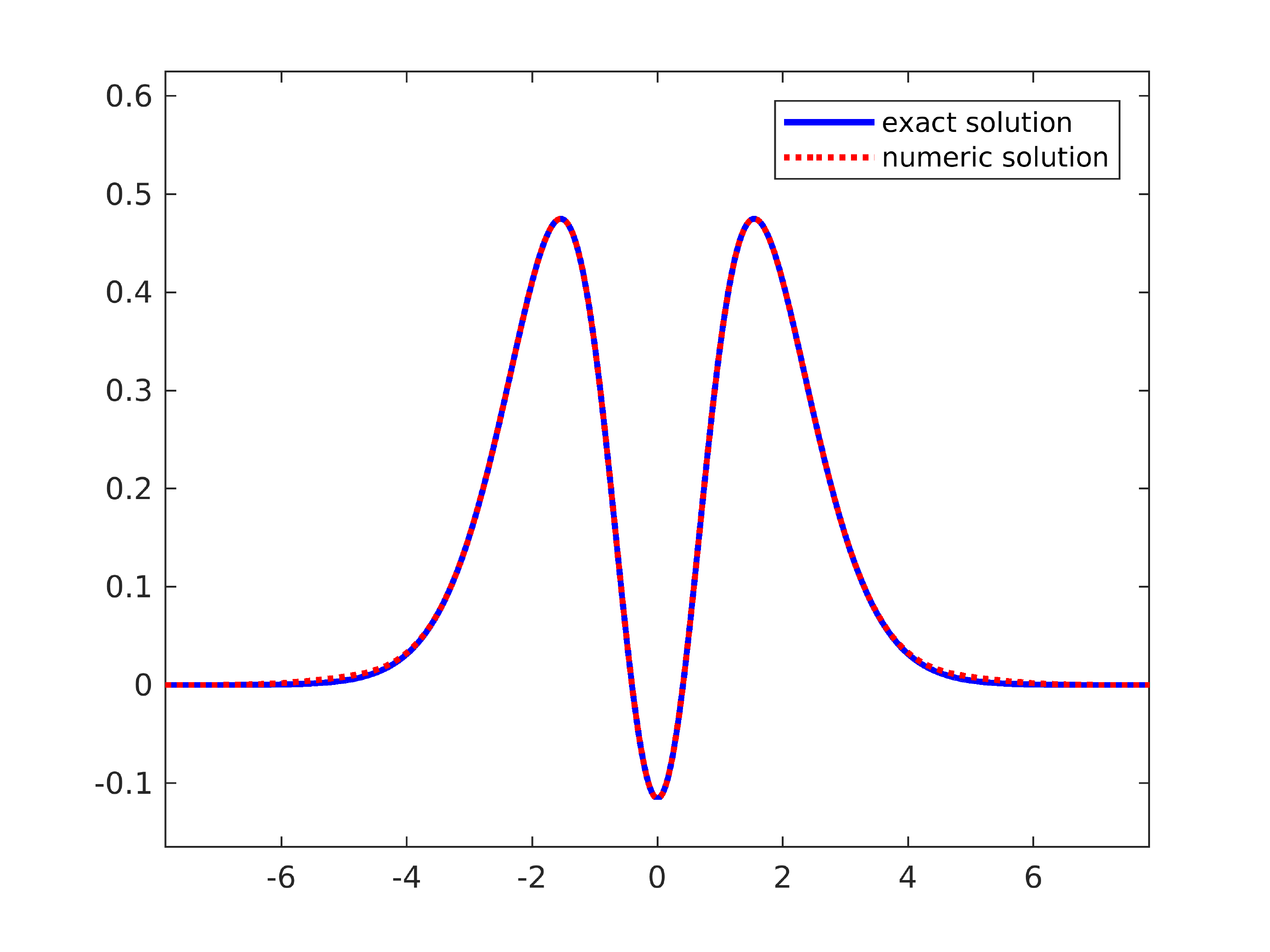}
\caption{$t=2$.}
\end{subfigure}
\\
\begin{subfigure}[b]{.32\textwidth}
\includegraphics[width=\textwidth]{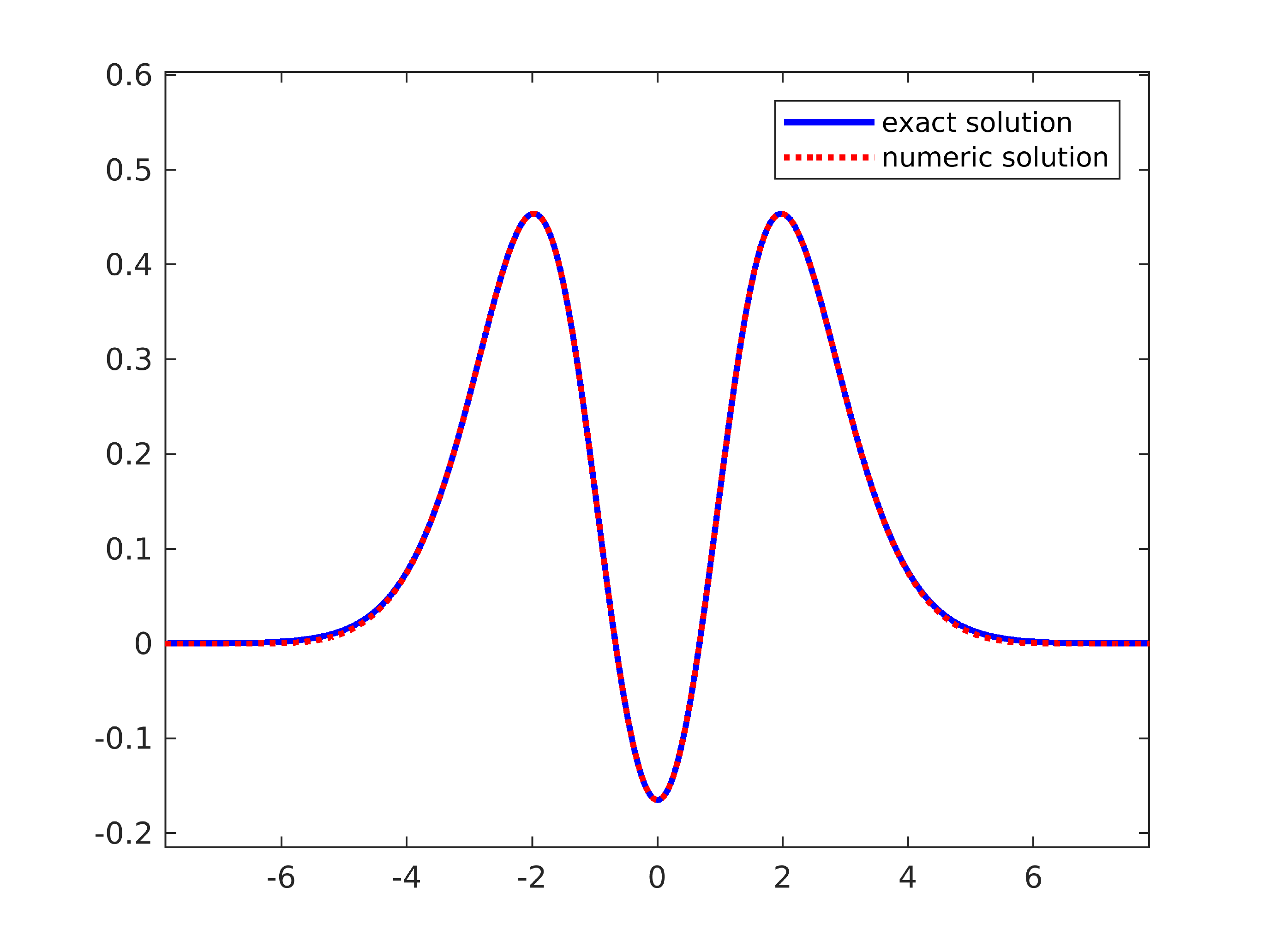}
\caption{$t=2.5$.}
\end{subfigure}
\,
\begin{subfigure}[b]{.32\textwidth}
\includegraphics[width=\textwidth]{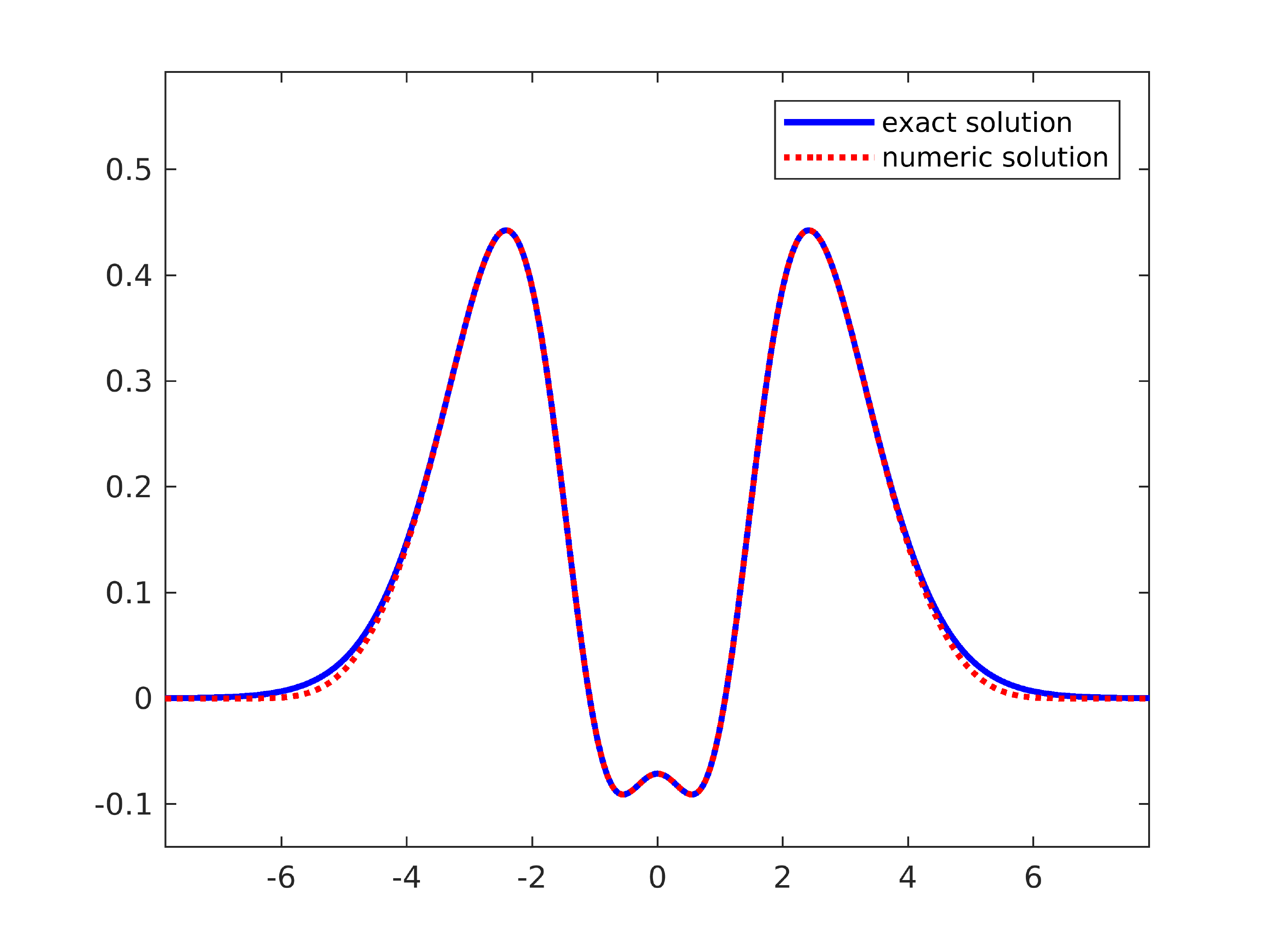}
\caption{$t=3$.}
\end{subfigure}
\,
\begin{subfigure}[b]{.32\textwidth}
\includegraphics[width=\textwidth]{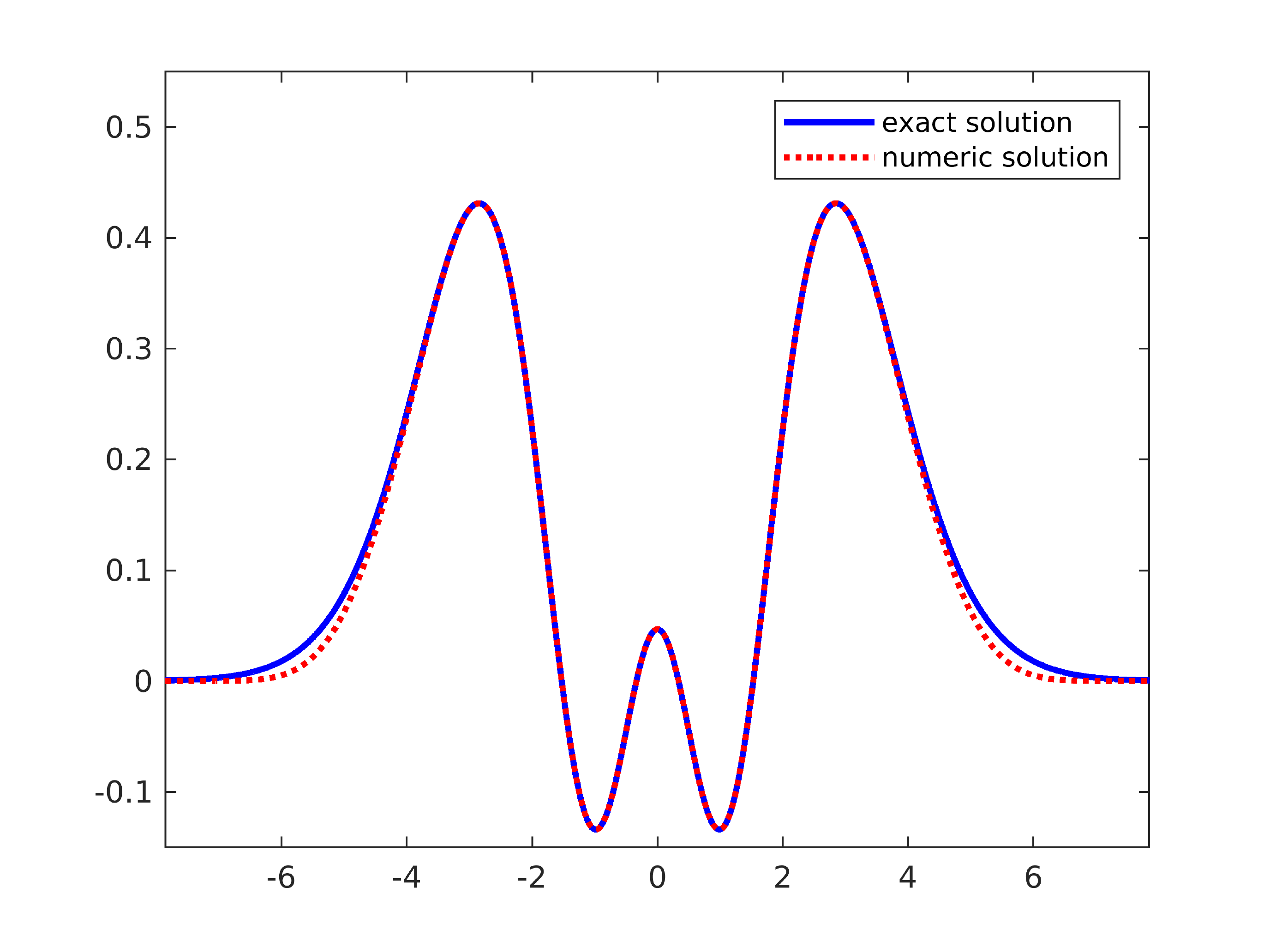}
\caption{$t=3.5$.}
\end{subfigure}
\caption{With reference to Test 4: the comparison between exact and approximated solution at six different times. The parameters for the simulation are $E=l=\rho=1$, $h=10^{-3}$, $M=2.5$, $N=6284$.}
\label{fig:comparison}
\end{figure}

In Figure~\ref{fig:error} we plot respectively the distance and the square distance between the exact solution and its numerical approximation for various $N$ using the semilogy scale. The appearance of ``spikes'' in the error approaching zero confirms the interpolating nature of the spectral operator. Observe that the error grows as we approach the boundaries. This is a typical phenomenon when dealing with spectral methods. More precisely, such aspect occurs whenever one approximate an initial-valued problem with an initial-boundary valued problem with periodic boundary conditions. Therefore, in order to avoid such aspect and to perform an error study, we restrict our attention to a suitable subinterval of the domain. For simplicity, we work on the interval $[-\pi,\pi]$.

\begin{figure}
\centering
\begin{subfigure}[b]{.45\textwidth}
\includegraphics[width=\textwidth]{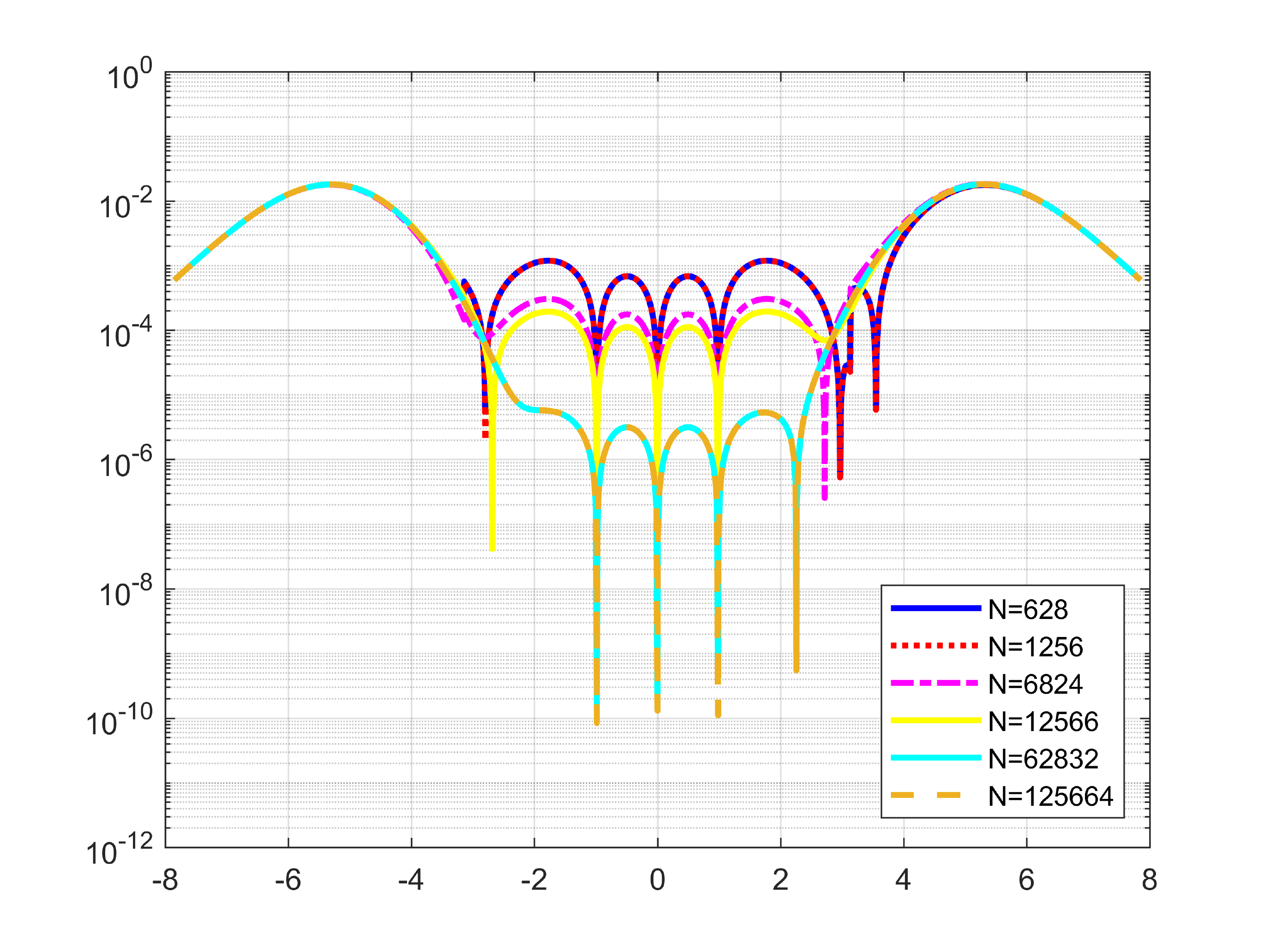}
\caption{$\left|u_N(x,3.5)-u^*(x,3.5)\right|$.}
\end{subfigure}
\,
\begin{subfigure}[b]{.45\textwidth}
\includegraphics[width=\textwidth]{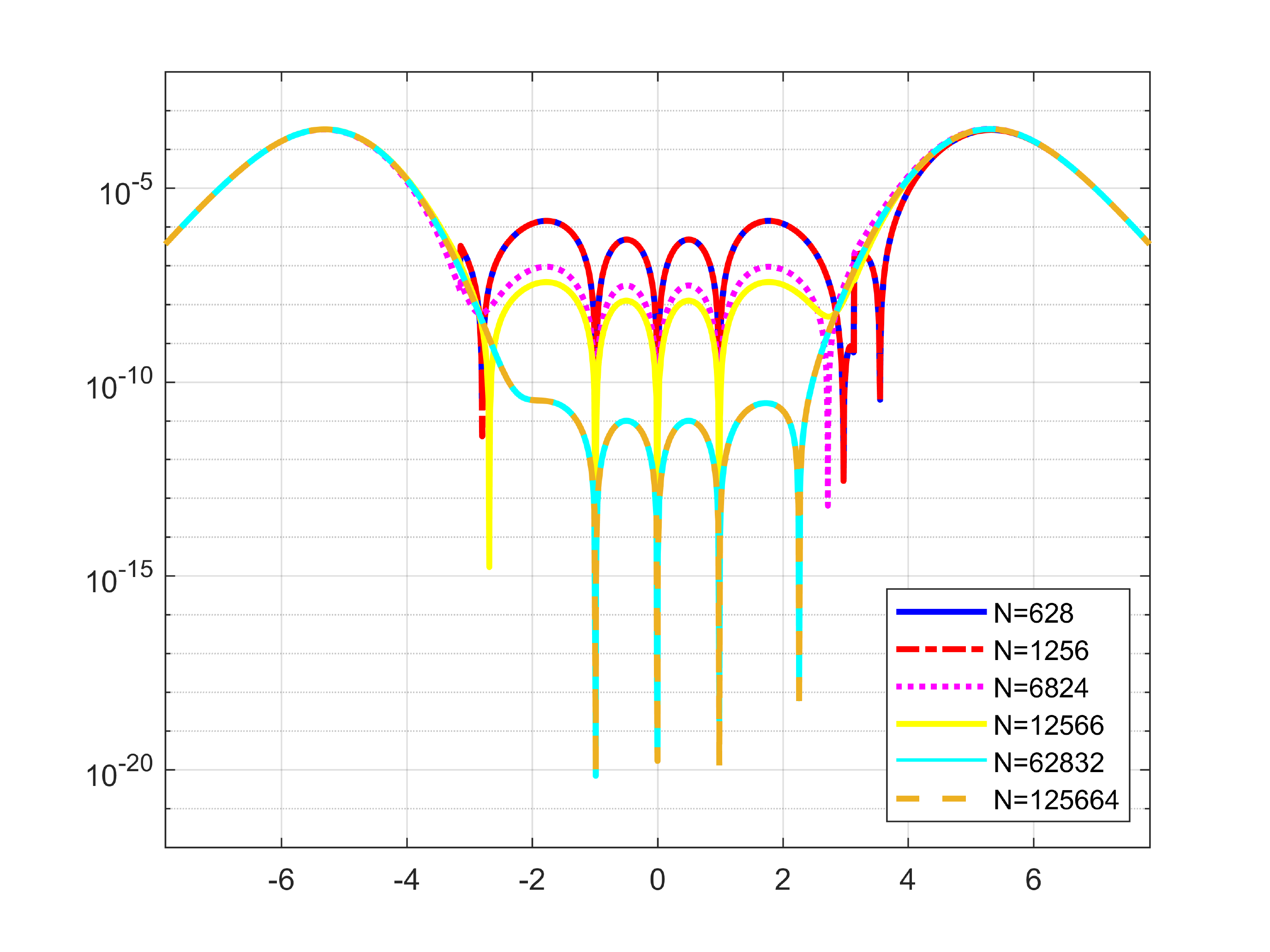}
\caption{$\left|u_N(x,3.5)-u^*(x,3.5)\right|^2$.}
\end{subfigure}
\caption{With reference to Test 4: the error for various $N$ using the semilogy scale. The parameters of the simulation are $E=l=\rho=1$, $h=10^{-3}$, and $M=2.5$.}
\label{fig:error}
\end{figure}


We perform an error study for this test in $[-\pi,\pi]$: we introduce the relative pointwise-error and the relative $L^2$-error respectively as follows
\[
E^t_{L^\infty} = \frac{\max_{j} \left|u_N(x_j,t)- u^*(x_j,t)\right|}{\max_{j} \left|u_N(x_j,t)\right|},\qquad E^t_{L^2} = \frac{\sum_{j}\left|u_N(x_j,t)- u^*(x_j,t)\right|^2}{\sum_{j}\left|u_N(x_j,t)\right|^2}.
\]

Table~\ref{tab:error} and Figure~\ref{fig:norm-comparison} depict the relative pointwise error and the relative $L^2$-error for increasing resolution at the fixed time $t=3.5$. 
\begin{figure}
\centering
\begin{subfigure}[b]{.45\textwidth}
\includegraphics[width=\textwidth]{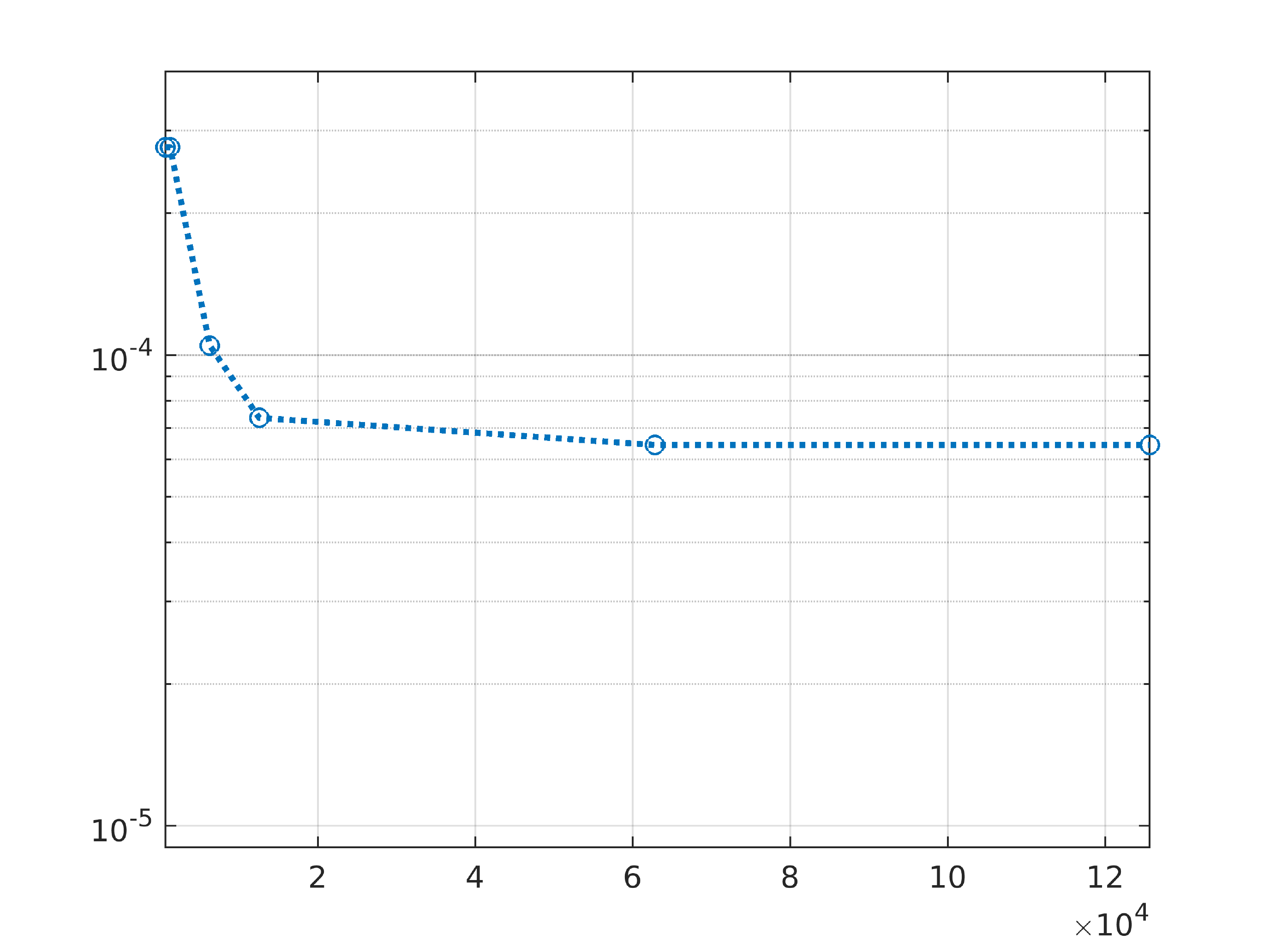}
\caption{$E^t_{L^\infty}$ by varying $N$.}
\end{subfigure}
\,
\begin{subfigure}[b]{.45\textwidth}
\includegraphics[width=\textwidth]{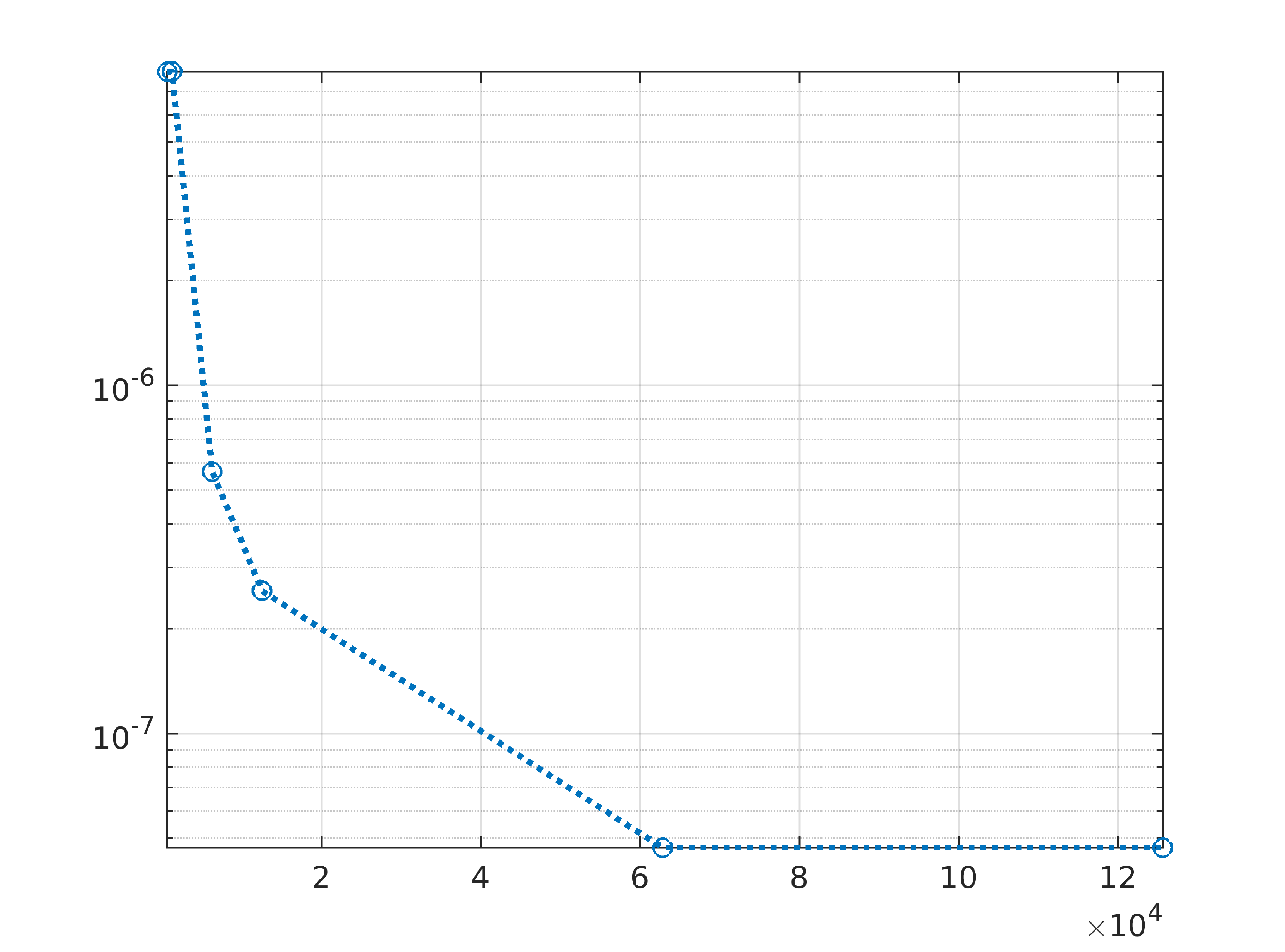}
\caption{$E^t_{L^2}$ by varying $N$.}
\end{subfigure}
\caption{With reference to Test 4: the comparison between the errors by varying $N$, using the semilogy scale. The parameters for the simulation are $h=10^{-3}$, $t=3.5$, $M=1$, $E=l=L=\rho=1$.}
\label{fig:norm-comparison}
\end{figure}

\begin{table}%
\centering%
\renewcommand\arraystretch{1.3}
\begin{tabular}{|c|c|c|
}
\hline
\hline
N&  $E_{L^\infty}^{t}$ & $E_{L^2}^{t}$ 
\\
\hline
\hline
$628$&$2.7628 \times 10^{-4}$&$7.9603\times10^{-6}$
\\
\hline
$1256$&$2.7628\times 10^{-4}$&$7.9774\times10^{-6}$
\\
\hline
$6284$&$1.0474\times 10^{-4}$&$5.6593\times10^{-7}$
\\
\hline
$12566$&$7.3552\times10^{-5}$&$2.5697\times10^{-7}$
\\
\hline
$62832$&$6.4412\times 10^{-5}$&$4.7057\times 10^{-8}$
\\
\hline
$125664$&$6.4412\times 10^{-5}$&$4.7048\times 10^{-8}$
\\
\hline
\hline
\end{tabular}
\renewcommand\arraystretch{1}
\caption{With reference to Test 4: the relative pointwise-error and relative $L^2$-error at time $t=3.5$ for different values of $N$ in the computational domain $[-\pi,\pi]$.}
\label{tab:error}
\end{table}

\subsection{Test 5: Comparison between MSV and MMI in the nonlinear case}

We now consider the case in which the pairwise force function is non linear with a finite horizon $\delta>0$. In particular, we will deal with the model  in which $f$  has the following form
\begin{equation*}
f(\xi,\eta)=\begin{cases}
c \frac{|\xi+\eta|-|\xi|}{|\xi|} \frac{\xi+\eta}{|\xi+\eta|},\quad&\text{if }0<|\xi| \leq \delta, \\
0,\quad&\text{if }|\xi|>\delta, \\
\end{cases}
\end{equation*}
[$c>0$ is a positive constant], which has a singularity in  $\xi=0$.


 
If we take the initial condition $u_{0}(x)=\epsilon x $, $\epsilon>0$,  the theoretical solution is (see~\cite{MadenciOterkus})
\begin{equation*}
u_x(x,t)=\frac{8 \epsilon L}{\pi^{2}} \sum_{k=0}^{\infty} \frac{(-1)^{k}}{(2 k +1)^{2}} sin \biggl( \frac{(2k+1) \pi x}{2L} \biggr) cos \biggl( \sqrt{\frac{E}{\rho}} \frac{(2k+1)\pi}{2L}t \biggr)
\end{equation*}

In Table~\ref{tab:error-nonlinear}, we report the maximum errors by varying the spatial and time discretization steps. We can see how all methods become of the first order of accuracy due to the singularity of the pairwise function force and because of the linearization of the function $f$.

\begin{table}
\centering
\renewcommand\arraystretch{1.3}
\begin{tabular}{|c|c|c|c|c|c|c|}
\hline
\hline
\rule[-4mm]{0mm}{1cm}
 \textbf{Methods} &  $h$  & $\tau$ &$N$ & $N_T$ &  $||     {\bf e}|| _{\infty}$ & $\log_2{(R_n)}$   \\
\hline
\hline
 & $0.1000$  & $0.0100$ & $10$& $1000$& $5.4590\times10^{-2}$& -  \\
 MSV & $0.0500$   & $0.0050$ & $20$& $2000$& $2.7285\times10^{-2}$& $1.0007$\\
 & $0.0250$  & $0.0025$& $40$& $4000$& $1.3605\times10^{-2}$ & $1.0007$\\
\hline
 & $0.1000$ & $0.0100$ & $10$& $1000$&$5.3895\times10^{-2}$  & -  \\
 MMI & $0.0500$  & $0.0050$ & $20$& $2000$&$2.7281\times10^{-2}$ & $0.9819$\\
 & $0.0250$ & $0.0025$ & $40$& $4000$&  $1.3603\times10^{-2}$ & $1.0036$\\
\hline
\hline
\end{tabular}
\renewcommand\arraystretch{1}
\caption{With reference to Test 5: the comparison among the performance of MSV and MMI methods in the nonlinear case by varying $h$, $\tau$, $N$ and $N_T$.}
\label{tab:error-nonlinear}
\end{table}



\section{Conclusions and future work}\label{sec:con}
 In this paper we have considered the linear peridynamic equation of motion which is described by a second order in time partial integro-differential equation. 
We have analyzed numerical techniques of higher order in space to compute a numerical solution, moreover, we have seen how applying similar techniques to the nonlinear model. 
Also a spectral method to discretize the space domain has been discussed. 
Thanks to the numerical simulations, we can deduce that it is possible to treat the linear problem in a not expensive way by implementing the St\"ormer-Verlet method, which is of the second order and is conditionally stable. While, a greater accuracy can be achieved by using Gauss two points formula for space discretization and the Trigonometric scheme for time discretization. Spectral techniques perform very well in the linear case, but they require to deal with periodic boundary conditions. Additionally, all the implemented methods can be applied to the nonlinear case using a linearization of the pairwise force $f$. Also spectral methods can be extended to nonlinear problem, and it could be the aim of future works.
 Furthermore, in future we would apply similar techniques to the nonlinear model using interpolation of the nonlinear terms in order to improve the accuracy in space and extend the results to space
domains of dimension greater than $1$, using finite element methods or mimetic finite difference methods (see for example~\cite{Lopez_Vacca_2016, Beirao_Lopez_Vacca_2017}).
 
\section*{Acknowledgements}
This paper has been partially supported by GNCS of Italian Istituto Nazionale di Alta Matematica. GMC, FM and SFP are members of the Gruppo Nazionale per l'Analisi Matematica, la Probabilit\`a e le loro Applicazioni (GNAMPA) of the Istituto Nazionale di Alta Matematica (INdAM). The authors thank the anonymous referees for the careful reading of the manuscript.

\bigskip
\noindent
{\bf References}


\end{document}